\newtheorem{definition}{Definition}
\newtheorem{theorem}{Theorem}
\newtheorem{remark}{Remark}
\newtheorem{lemma}{Lemma}
\newtheorem{problem}{Problem}
\newcommand{\F}{\mathcal{F}}
\DeclareMathOperator{\diam}{diam}
\DeclareMathOperator{\inter}{int}
\DeclareMathOperator{\cl}{cl}
\DeclareMathOperator{\conv}{conv}
\DeclareMathOperator{\area}{area}
\DeclareMathOperator{\perim}{perim}
\newcommand{\B}{\mathbf{B}}
\renewcommand{\Re}{\mathbb{R}}
\begin{document}

\title[Monohedral tilings]{On monohedral tilings of a regular polygon}
\author[B. Basit]{Bushra Basit}
\address{Department of Geometry, Institute of Mathematics,\\
Budapest University of Technology and Economics,\\
M\H uegyetem rkp. 3., H-1111 Budapest, Hungary} 
\email{bushrabasit18@gmail.com}

\author[Z. L\'angi]{Zsolt L\'angi}
\address{Department of Geometry, Institute of Mathematics,\\
Budapest University of Technology and Economics,\\
M\H uegyetem rkp. 3., H-1111 Budapest, Hungary}
\address{MTA-BME Morphodynamics Research Group,\\
M\H uegyetem rkp. 3., H-1111 Budapest, Hungary} 
\email{zlangi@math.bme.hu}

\thanks{Partially supported by
the BME Water Sciences \& Disaster Prevention TKP2020 Institution Excellence Subprogram, grant no. TKP2020 BME-IKA-VIZ,
the NKFIH grant K134199, the J\'anos Bolyai Research Scholarship of the Hungarian Academy of Sciences, and the \'UNKP-20-5 New National Excellence Program by the Ministry of Innovation and Technology.}

\subjclass{52C20, 52B45, 52A10}
\keywords{dissection, monohedral tiling, topological disc, Jordan region}

\begin{abstract}
A tiling of a topological disc by topological discs is called monohedral if all tiles are congruent. Maltby (J. Combin. Theory Ser. A 66: 40-52, 1994) characterized the monohedral tilings of a square by three topological discs. Kurusa, L\'angi and V\'\i gh (Mediterr. J. Math. 17: article number 156, 2020) characterized the monohedral tilings of a circular disc by three topological discs. The aim of this note is to connect these two results by characterizing the monohedral tilings of any regular $n$-gon with at most three tiles for any $n \geq 5$.
\end{abstract}

\maketitle

\section{Introduction}

Subsets of the Euclidean plane $\Re^2$ homeomorphic to the Euclidean closed circular unit disc $\B^2$ centered at the origin $o$  are usually called \emph{topological discs} or \emph{Jordan regions}.\footnote{The set $\B^2$ is the set of points in the plane whose Euclidean distance from $o$ is at most one. To distinguish them from topological dics, we call the sets similar to $\B^2$ circular discs, or Euclidean circular discs.} A family of topological discs $\{ D_1, D_2, \ldots, D_k\}$ whose union is a topological disc $D$ and whose elements are mutually nonoverlapping (i.e. their interiors are mutually disjoint), is called a \emph{tiling}, \emph{decomposition}, or \emph{dissection} of $D$, and the elements of the family are called \emph{tiles}. A tiling is called \emph{monohedral}, if all tiles are congruent to a given topological disc, which is often called \emph{prototile} \cite{FGL21}.

The history of the investigation of tilings goes back to ancient times and well beyond the boundary of mathematics (see e.g. \cite{GS90, Schulte}). The aim of this paper is to examine one such problem. A result of Maltby \cite{Maltby} in 1994 states that a square cannot be dissected into three non-rectangular congruent topological discs. Along the same line, Yuan, C. Zamfirescu and T. Zamfirescu \cite{Zamfirescu} proved, answering a question of Danzer, that in any monohedral tiling of a square by five \emph{convex} tiles, the prototile is a rectangle, and conjectured that the same holds if the number of tiles is an odd prime. This question has been recently answered in \cite{RRW2020} in the special case that the prototile is a $q$-gon with $q \geq 6$ or it is a right-angled trapezoid, and a computer-assisted proof has been given in \cite{MR2021} for seven or nine tiles.

We intend to investigate a similar question, also based on the result of Maltby in \cite{Maltby}. To state our main result, we call a monohedral tiling of a regular $n$-gon $P$, centered at the origin $o$, by tiles $D_1, D_2, \ldots, D_k$ \emph{rotationally generated} if the rotation around $o$ and with angle $\frac{2\pi}{k}$ leaves $P$ invariant, and permutes the tiles (cf. Figure~\ref{fig:rotationally_generated}). 

\begin{theorem}\label{thm:main}
Let $P$ be a regular $n$-gon with $n \geq 5$, and let $\F$ be a monohedral tiling of $P$ by $k$ topological discs, where $2 \leq k \leq 3$. Then either $k=2$, $n$ is odd and $\F$ contains the two halves of $P$ dissected by a line of symmetry of $P$, or $n$ is divisible by $k$ and $\F$ is rotationally generated. 
\end{theorem}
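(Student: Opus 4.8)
The plan is to study the \emph{interface graph} $G=\bigl(\bigcup_{i}\partial D_i\bigr)\setminus \partial P$ cut out inside $P$ by the tile boundaries, and to drive the argument with three tools: the equality of perimeters, the action of the congruences between the tiles, and the fact that for $n\ge 5$ every vertex of $P$ has interior angle $\alpha=\pi-\tfrac{2\pi}{n}\in[\tfrac{3\pi}{5},\pi)$, in particular $\alpha>\tfrac{\pi}{2}$. Because each tile is a topological disc and their union is $P$, the graph $G$ can take only a few shapes: for $k=2$ it is a single Jordan arc $\gamma$ with endpoints $p,q\in\partial P$; for $k=3$ it is either two disjoint arcs (the tiles form a path $D_1$--$D_2$--$D_3$) or three arcs issuing from one interior node $c$ (the tiles are pairwise adjacent). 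Writing $p_0$ for the common perimeter, $\ell$ for the side length of $P$, and $J$ for the total length of $G$, the identity $k\,p_0=n\ell+2J$ holds; for $k=2$ it already forces each tile to carry half of $\partial P$, so $p$ and $q$ bisect $\perim(P)$.

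Treating $k=2$ first, let $\phi\colon D_1\to D_2$ be the congruence. I would first check that $\phi$ respects the splitting of the boundary, i.e.\ $\phi(\gamma)=\gamma$ and $\phi(A_1)=A_2$, where $A_i=\partial D_i\cap\partial P$; the alternative $\phi(A_1)=\gamma$ is ruled out by comparing the total turning and convexity of the polygonal exposed arc with the interface it would be mapped to. Granting this, $\phi$ permutes the transition points $\{p,q\}$. If $\phi$ fixes both, then $\phi$---a nonidentity isometry fixing two points---is the reflection in the line $pq$; since $\phi$ is then an involution interchanging the tiles it preserves $P$, so $pq$ lies on a symmetry axis, and the involution it induces on the parameter interval of $\gamma$ fixes both endpoints and is therefore the identity, whence $\gamma$ is the straight segment $pq$. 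If instead $\phi$ swaps $p$ and $q$, it is either the half-turn about the midpoint of $pq$ or the reflection in the perpendicular bisector $L'$ of $pq$; the latter is impossible, because $L'$ would then be a symmetry axis of $P$ whose two intersection points with $\partial P$ are fixed by $\phi$, yet forced into the swapped pair $\{p,q\}$. Hence in the swap case $\phi$ is the half-turn, which preserves $P$ exactly when $n$ is even, giving a rotationally generated tiling; and the reflection case always produces the straight cut along a symmetry axis---for odd $n$ the asserted dissection through a vertex and the opposite edge-midpoint, and for even $n$ again a rotationally generated tiling via the half-turn about $o$.

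For $k=3$ the first task is to eliminate the path configuration. There the middle tile $D_2$ meets $\partial P$ in two arcs while $D_1$ and $D_3$ meet it in one each, so the $n$ vertices of $P$---each an $\alpha$-corner---are distributed asymmetrically; tracking these $\alpha$-corners through the congruences $D_1\cong D_2\cong D_3$ and using $\alpha>\tfrac{\pi}{2}$ (which forbids two $\alpha$-corners from sharing a straight boundary point, since $2\alpha>\pi$, and constrains how a vertex may be split) I expect to reach a contradiction in the corner bookkeeping. In the remaining pairwise-adjacent case the three arcs meet at the node $c$, where the three tile-angles sum to $2\pi$; the congruences and the angle data then force these three angles to coincide and the three exposed arcs to be mutually congruent, and one shows the configuration is invariant under a rotation of order $3$ about $c$ that cyclically permutes the tiles. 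Since this rotation preserves $P$, its centre $c$ must be the centre $o$ of $P$; a rotation of order $3$ preserves a regular $n$-gon exactly when $3\mid n$, and $\F$ is then rotationally generated.

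The step I expect to be the genuine obstacle is the combinatorial one of pinning down the structure of $G$---above all the elimination of the $k=3$ path configuration---together with the insistence, throughout, that the tiles are merely topological discs with possibly wild interfaces. Since I cannot match straight edges, every rigidity statement must be extracted either from the parameter-involution trick (for a fixed interface) or from the discrete data carried by the $n$ vertices of angle $\alpha$; keeping this vertex bookkeeping consistent while simultaneously allowing curved, only-congruent interfaces, and excluding configurations that are symmetric on $\partial P$ but not globally invariant, is the crux of the argument.
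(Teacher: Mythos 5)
Your proposal collapses at the very first structural step, and again at each of the three case analyses, precisely where the real difficulties of this theorem lie. The classification of the interface graph for $k=3$ is incomplete: besides the path configuration (two disjoint arcs) and the three-arcs-from-an-interior-node configuration, one tile may meet $\partial P$ in at most two points --- or not at all --- in which case the interface graph contains the \emph{closed} curve bounding that tile; moreover, the triple point of the pairwise-adjacent configuration may lie on $\partial P$. These are exactly the possibilities isolated in Lemma~\ref{lem:topology} (Type 1, and the boundary position of $q$ in Type 3), and disposing of the ``tile with at most two boundary points'' configurations for $k=3$ costs the paper a page of genuine work (the $k=3$ part of Case 3 in Subsection~\ref{subsec1}, via central-symmetry and area estimates); such configurations cannot simply be absent from your list. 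In addition, your bookkeeping identity $k\,p_0=n\ell+2J$ presupposes that the interfaces are rectifiable, which is not granted for topological discs (the paper explicitly warns of Koch-snowflake-type boundaries and replaces length bookkeeping by equidecomposability of multicurves, Lemmas~\ref{lem:graphs} and~\ref{lem:cutcong}).

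Within the cases you do treat, the hard alternative is assumed away each time. For $k=2$ the dichotomy ``$\phi(A_1)=A_2$ or $\phi(A_1)=\gamma$'' is false: $\phi(A_1)$ can overlap $A_2$ \emph{partially}, sharing segments adjacent to $p$ and $q$ while its remainder runs through the interior of $P$ (the ``slightly overlapping'' case of Remark~\ref{rem:overlapping}); ruling this out is not a matter of ``turning and convexity'' but requires the area estimate $\area(D_1)>\frac{1}{2}\area(P)$ of Case 2 in Subsection~\ref{subsec1}. For the pairwise-adjacent $k=3$ case, your angle argument at the node $c$ tacitly assumes that the congruences between tiles fix $c$; they need not. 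In the paper's Subcase 2.1 the rotation $g_{21}$ sends the triple point $q$ to an \emph{interior} point of another interface arc, and excluding that situation is the hardest step of the entire proof (a counting argument for congruent copies of $S_1$ in the tile boundaries, resting on Lemmas~\ref{lem:finite_copies}--\ref{lem:cutcong}). Finally, the elimination of the path configuration is stated only as an expectation; corner counting alone cannot suffice, because tiles may have corners along the interfaces and a vertex of $P$ may be split between two tiles --- the paper's Subsection~\ref{subsec2} needs three separate cases with perimeter-of-convex-hull, slight-overlap and area arguments. The one sound ingredient is your involution trick for a reflection-invariant interface, which corresponds to Remark~\ref{rem:reflection} and the $k=2$ part of Lemma~\ref{lem:diameter}; the load-bearing parts of the theorem are not reached by the proposal.
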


\begin{figure}[ht]
\begin{center}
 \includegraphics[width=0.35\textwidth]{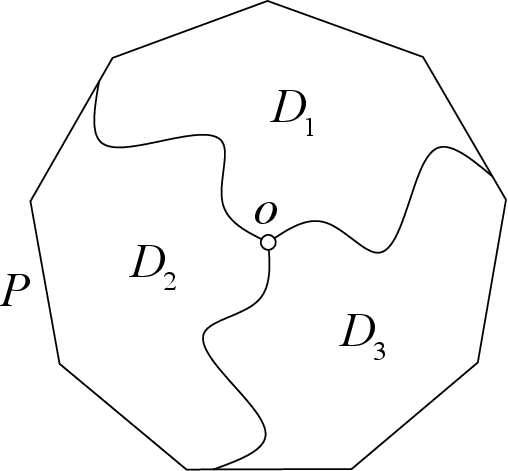}
 \caption{A rotationally generated tiling of a regular $9$-gon $P$ with three tiles}
\label{fig:rotationally_generated}
\end{center}
\end{figure}

We note that the same theorem with the Euclidean circular disc in place of $P$ was proved in \cite{KLV2020}, and monohedral tilings, with at most $3$ tiles, of a convex disc with strictly convex and smooth boundary were partially characterized in \cite{NV}. Theorem~\ref{thm:main} can be regarded as a result connecting the one in \cite{Maltby} for squares and the one in \cite{KLV2020} for circular discs. The proof of Theorem~\ref{thm:main} is based on (geometric, combinatorial and topological) tools from both \cite{KLV2020} and \cite{Maltby}, and also on some new ideas.

Finally, we remark that in the past few years a `dual version' of this problem, namely the investigation of dissecting the Euclidean plane into mutually \emph{incongruent} tiles with equal area under various constraints has also gained significant interest. For related results the interested reader is referred to \cite{Frettloh, FGL21, F-R, F-R2, KPT18_1, KPT18_2}.
The number of dissections of a square into equal area rectangles was estimated in \cite{BF88, HLL83}.
For the investigation of monohedral dissections of geometric figures using a different notion of dissection, see e.g. \cite{Laczkovich, Kiss}.

The structure of our paper is as follows. In Section~\ref{sec:prelim} we introduce the necessary notation and tools to prove our main result. In Section~\ref{sec:proof} we present the proof of Theorem~\ref{thm:main}. Finally, in Section~\ref{sec:remarks} we collect some additional remarks.

\section{Preliminaries}\label{sec:prelim}

In the paper, for any set $X \subset \Re^2$, we denote the interior, the boundary, the closure and the convex hull of $X$ by $\inter(X), \partial X$, $\cl(X)$ and $\conv (X)$, respectively.
Furthermore, if $X$ is bounded and nonempty, then $\diam(X)$ denotes its diameter. For any $x,y \in \Re^2$, by $[x,y]$ we denote the closed segment with endpoints $x,y$. By a simple curve we mean a continuous curve which does not cross itself, and a simple, closed curve is a simple curve whose two endpoints coincide. With a little abuse of notation, we call the points of a simple, not closed curve, different from its endpoints, interior points of the curve.
Finally, for brevity, we call a topological disc simply a \emph{disc}.

In the proof, $n \geq 5$ and $P$ denotes a regular $n$-gon with unit side-length centered at $o$, and vertices $p_1, p_2, \ldots, p_n$ in counterclockwise order.
We set $\F = \{ D_1, D_2, \ldots, D_k \}$ with $k \geq 2$, where all the $D_i$ are congruent to a disc $D$, and for $i=1,2,\ldots,k$ let $S_i = D_i \cap \partial P$. For any value of $i \neq 1$, we choose an isometry $g_{1i}: \Re^2 \to \Re^2$ satisfying $g_{1i}(D_{1}) = D_{i}$, and set $g_{i1} = g_{1i}^{-1}$, and define $g_{ij}$ by $g_{ij}(\cdot) = g_{1j}(g_{1i}^{-1} (\cdot))$ for all $i,j$.

We remark that every disc is compact, and thus, it is Lebesgue measurable. On the other hand, the boundary of a disc is not necessarily rectifiable; as an example we may choose e.g. the Koch snowflake (for more `esoteric' examples, see \cite{Sagan}). In the proof, for any disc $D$ we use the notation $\area(D)$ and $\perim(D)$ for the area and the perimeter of $D$, respectively, and we use the latter one only if $\partial D$ is clearly rectifiable. If $\Gamma$ is a rectifiable curve, then by $l(\Gamma)$ we mean the length of $\Gamma$. In particular, this yields that if $\partial D$ is rectifiable for some disc $D$, then we have $l(\partial D) = \perim(D)$.

We start with some preliminary lemmas and remarks.

\begin{remark}\label{rem:incurve}
Since any $D_i$ is a disc, any two points of $D_i$ can be connected by a continuous curve which contains only interior points of $D_i$, apart from its endpoints. In the paper, we call such a curve an \emph{in-curve} of $D_i$.
\end{remark}

\begin{remark}\label{rem:reflection}
We note that if some isometry $g_{ij}$ is a reflection about a line $L$, then $L$ separates $D_i$ and $D_j$. Indeed, suppose for contradiction that there are points $x,y \in D_i$ in different open half planes bounded by $L$, and let $\Gamma$ be an in-curve of $D_i$ connecting $x$ and $y$. Then there is a point $z$ of $\Gamma$ on $L$. Thus, $g_{ij}(z) = z$, implying that $\inter (D_i) \cap \inter (D_j) \neq \emptyset$; a contradiction. 

\end{remark}

\begin{lemma}\label{lem:diameter}
If $\diam(D) = \diam(P)$, then $k=2$. Furthermore, either $n$ is odd and $\F$ contains the two halves of $P$ dissected by a line of symmetry of $P$, or $n$ is divisible by $2$ and $\F$ is rotationally generated.
\end{lemma}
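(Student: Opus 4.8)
The plan is to exploit that $\diam(D)=\diam(P)$ is an extremal hypothesis forcing every tile to reach all the way across $P$. First I would record the elementary fact that in a convex polygon the diameter is attained only by pairs of vertices at maximal distance; since $D_i\cong D$ gives $\diam(D_i)=\diam(P)$ and $D_i\subseteq P$, each tile $D_i$ must contain two vertices $a_i,b_i$ of $P$ with $|a_i-b_i|=\diam(P)$, which I will call a \emph{diametral pair}. Joining $a_i$ to $b_i$ by an in-curve $\Gamma_i$ (Remark~\ref{rem:incurve}), the interiors of the $\Gamma_i$ lie in the pairwise disjoint interiors of the tiles, so the arcs $\Gamma_i$ are pairwise non-crossing. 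This reduces the geometry to a combinatorial question about which diametral pairs of $P$ can occur together, together with a rigidity analysis of the congruences $g_{1i}$.

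Next I would analyze the diametral pairs by parity, after checking that each tile carries a \emph{unique} diametral pair. For even $n$ every diametral chord passes through $o$; cutting $P$ along $\Gamma_1$ and observing that any other antipodal pair has one endpoint in each of the two pieces, one sees that every tile must carry the \emph{same} pair $\{p_j,p_{j+n/2}\}$. For odd $n$ the ``diameter graph'', joining each vertex to its two farthest vertices, is a single $n$-cycle (since $\gcd(\lfloor n/2\rfloor,n)=1$), and non-crossing of the $\Gamma_i$ forces any two of the pairs either to coincide or to share exactly one vertex; in particular, among at most three tiles at most two distinct pairs occur. In both cases $g_{1i}$ carries the diametral pair of $D_1$ onto that of $D_i$, so $g_{1i}$ lies in the finite stabiliser of the relevant vertex set: the Klein four-group fixing a fixed antipodal pair when the pairs coincide, or the few isometries carrying one pair to the other when they share a vertex. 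Here Remark~\ref{rem:reflection} is the decisive lever, since a reflection among the $g_{1i}$ has its axis separating the two tiles. This immediately excludes the perpendicular bisector of a diametral chord (a shared vertex would have to lie on it but does not), and an ``apply the isometry once more'' argument excludes every non-central rotation, as it would force a spurious symmetry of $P$.

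From here the two halves of the conclusion fall out. For $k=2$ and even $n$ the surviving options for $g_{12}$ are the reflection in the diameter line (whose axis, being a symmetry line through $o$, confines the common boundary to the straight diameter, giving the two halves) and the half-turn about $o$; both tilings are rotationally generated. For $k=2$ and odd $n$ the two pairs must share a vertex $q$, the congruence is forced to be the reflection in the perpendicular bisector of the two far vertices, which is exactly the axis of symmetry of $P$ through $q$, and Remark~\ref{rem:reflection} then confines the dividing arc to this axis, yielding the two halves. To exclude $k=3$ I would use that at most two distinct diametral pairs occur, so two tiles share a pair; in the even case the remaining two nontrivial congruences are the axis reflection and the half-turn, which pin the third tile to one half of $P$ while its area forces it to meet the other half, a contradiction. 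The hard part will be the odd case, both in eliminating the configuration in which two tiles carry the \emph{same} diametral pair and in ruling out $k=3$: because the diametral chords no longer pass through $o$, the clean ``all diameters meet at the centre'' mechanism of the disc and even cases is unavailable and must be replaced by the $n$-cycle combinatorics together with the separation and spurious-symmetry arguments, while also controlling the a priori possibility that a single tile carries more than one diametral pair and handling the topological bookkeeping that the common boundary of two tiles is a simple arc.
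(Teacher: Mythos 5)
Your opening moves coincide with the paper's: every tile contains a pair of vertices of $P$ at distance $\diam(P)$, the non-crossing of in-curves (Remark~\ref{rem:incurve}) forces all these pairs to coincide when $n$ is even and to pairwise share a vertex when $n$ is odd (the diameter graph being an odd cycle), and each $g_{1i}$ must carry diametral pairs to diametral pairs, hence lies in a short explicit list of isometries, some of which Remark~\ref{rem:reflection} eliminates. Your treatment of the even case is essentially the paper's and is sound. The problem is the odd case, which you yourself defer as ``the hard part'': there your proposal has two genuine gaps, and the tools you name (separation via Remark~\ref{rem:reflection} and ``spurious symmetry'') cannot close them.

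First, your mechanism for killing the candidate isometries that are not symmetries of $P$ --- the rotation about the shared vertex $p_1$ taking $p_m$ to $p_{m+1}$, and the orientation-reversing isometry taking $(p_1,p_m)$ to $(p_{m+1},p_1)$ --- is to ``apply the isometry once more'' and obtain a forbidden symmetry of $P$. This does not work: $g_{12}^2$ maps $D_1$ onto $g_{12}(D_2)$, which is not a tile of the decomposition, so iterating $g_{12}$ produces no constraint whatsoever; iteration only bites once $g_{12}$ is already known to map $P$ to itself (e.g.\ it does rule out a central rotation). The paper's decisive device, absent from your outline, is the midpoint $q$ of the far side $[p_m,p_{m+1}]$: $q$ lies in some tile, say $D_1$, and then $g_{12}(D_1)=D_2\subseteq P$ exhibits $g_{12}^{-1}(P)$ as a congruent copy of $P$ containing the three points $p_1, p_m, q$; since the only such copy is $P$ itself, $g_{12}(P)=P$, and only then does $g_{12}(\{p_1,p_m\})=\{p_1,p_{m+1}\}$ pin $g_{12}$ down to the reflection in the axis through $p_1$. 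Second, you give no argument for the configuration in which two tiles contain the \emph{same} diametral pair $\{p_1,p_m\}$, nor for excluding $k=3$, and Remark~\ref{rem:reflection} cannot supply one: the two surviving candidates for $g_{12}$ there (the reflection in the line through $p_1,p_m$ and the point reflection in the midpoint of $[p_1,p_m]$) are fully compatible with the separation property and force no symmetry of $P$. The paper excludes them by a containment-plus-compactness argument you lack: both tiles lie in $P\cap g_{12}(P)\subsetneq P$, so a third tile must contain $\cl(P\setminus g_{12}(P))$, which contains \emph{both} far vertices $p_m$ and $p_{m+1}$ --- contradicting the fact that no tile contains both. That last fact is exactly your unproved ``each tile carries a unique diametral pair''; the paper proves it separately by showing that a tile containing $p_1,p_m,p_{m+1}$ would force every $g_{ij}$ to be one of three reflections, each violating Remark~\ref{rem:reflection}. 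Without these ingredients the odd case, and hence the lemma, remains open in your outline.
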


\begin{proof}
Under our conditions, each $D_i$ contains a diametrically opposite pair of points of $P$, or in other words, the two endpoints of a longest diagonal of $P$.
First, observe that if $p_{i_1}, p_{i_2} \in D_i$ and $p_{j_1}, p_{j_2} \in D_j$ are mutually distinct diametrically opposite points of $P$ where $p_{i_1}, p_{j_1}, p_{i_2}, p_{j_2}$ are in this cyclic order in $\partial P$, then any in-curve of $D_i$ connecting $p_{i_1}$ and $p_{i_2}$ would cross any in-curve of $D_j$ connecting $p_{j_1}$ and $p_{j_2}$, leading to a contradiction. Thus, there is a vertex of $P$ contained in any diameter of any $D_i$. Without loss of generality, let us assume that $p_1$ is such a vertex.

First, consider the case that $n=2m$ for some integer $m \geq 3$. Since $[p_1,p_{m+1}]$ is the unique diameter of $P$ containing $p_1$, it follows that $p_{m+1} \in D_i$ for all values of $i$. Furthermore, any isometry mapping a longest diagonal of $P$ into a longest diagonal of $P$ is a symmetry of $P$, implying that $g_{ij}$ is a symmetry of $P$ for all $i,j$. Thus, $g_{ij}$ is the reflection about the line $L$ through $[p_1,p_{m+1}]$, or about the bisector $L'$ of $[p_1,p_{m+1}]$, or about $o$. On the other hand, if $g_{ij}$ is the reflection about $L'$, the fact that $p_1$ and $p_{m+1}$ are in different open half planes bounded by $L'$ contradicts Remark~\ref{rem:reflection}.
Thus, $g_{ij}$ is the reflection about $L$ or about $o$ for any $i \neq j$. Since $g_{il}(\cdot) =  g_{jl} (g_{ij} (\cdot))$ for all $i,j,l$ by definition, the fact that there are only two possible isometries as $g_{ij}$ implies that $k \leq 2$. If $g_{12}$ is the reflection about $o$, then we are done. If $g_{12}$ is the reflection about $L$, then from Remark~\ref{rem:reflection} it follows that $L$ separates $D_1$ and $D_2$, and the tiling is rotationally generated.

Finally, consider the case that $n=2m+1$ for some integer $m \geq 2$, and let $L$ denote the line through $[o,p_1]$. By our conditions, any tile $D_i$ contains $p_{m+1}$ or $p_{m+2}$. Suppose for contradiction that a tile contains both $p_{m+1}$ and $p_{m+2}$. Then any tile contains either $p_1$, $p_{m+1}$ and $p_{m+2}$, or $p_1,p_2$ and $p_{m+2}$, or $p_1,p_n$ and $p_{m+1}$. However, this would give points $p_{i_{1}}, p_{j_{1}}, p_{i_{2}}, p_{j_{2}}$ as in the first paragraph of the proof, which was shown to be impossible.
Thus, any tile contains either $p_{m+1}$ or $p_{m+2}$.

Assume that there are at least two tiles containing one of them, say $p_{m+1} \in D_1, D_2$. Then $g_{12}$ is the reflection about either the line $L'$ through $[p_1,p_{m+1}]$, or the bisector of $[p_1,p_{m+1}]$, or the midpoint of $[p_1,p_{m+1}]$. Here the second case contradicts Remark~\ref{rem:reflection}. In the first and the third cases we have that $D_1, D_2 \subset P \cap P'$, where $P'=g_{12}(P)$ (cf. Figure~\ref{fig:lem_diameter}). Thus, $P' \cap P \subsetneq P$ yields that $k \geq 3$. If $k=3$, then $P \setminus P' \subseteq D_3$, and the compactness of $D_3$ implies that $p_{m+1}, p_{m+2}\in D_3$; a contradiction, as in the previous paragraph. If $k > 3$, then there are at least two tiles containing $p_{m+2}$, which, by the previous argument, are contained in $P \cap P''$, where $P''$ is the reflected copy of $P$ about the line through $[p_1,p_{m+2}]$. Thus, in this case the midpoint of $[p_{m+1},p_{m+2}]$ does not belong to any tile; a contradiction.

\begin{figure}[ht]
\begin{center}
 \includegraphics[width=0.38\textwidth]{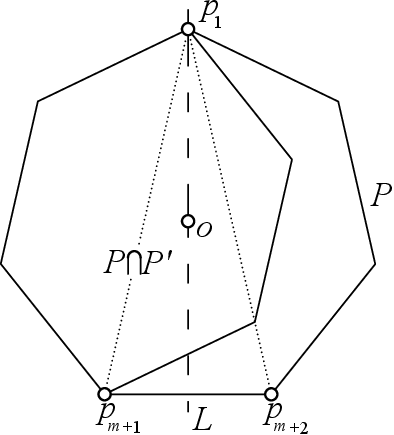}
 \caption{An illustration for the proof of Lemma~\ref{lem:diameter}}
\label{fig:lem_diameter}
\end{center}
\end{figure}

We have shown that $k=2$, and there is a unique tile containing $p_{m+1}$ and a unique tile containing $p_{m+2}$. Let these tiles be $D_1$ and $D_2$, respectively.
Let $q$ be the midpoint of $[p_{m+1},p_{m+2}]$ and assume, without loss of generality, that $q \in D_1$. Then the only congruent copy of $P$ containing $p_1,p_{m+1},q$ is $P$, implying that $g_{12}(P) = P$. Since we also have $g_{12}([p_1,p_{m+1}]) =[p_1,p_{m+2}]$, this yields that $g_{12}$ is the reflection about the line $L$ through $[o,p_1]$, from which the assertion readily follows.
\end{proof}

Next, we recall Lemma 2.3 from \cite{KLV2020}.

\begin{lemma}\label{lem:topology3}
Let $\{ \bar{D}_{1}, \bar{D}_{2}, \bar{D}_{3}\}$ be a tiling of the disc $\bar{D}$ where, for $i= 1,2,3$, $\bar{D}_{i}$ is a disc such that $\bar{S}_{i} = \bar{D}_{i} \cap \partial \bar{D}$ is a nondegenerate simple continuous curve. Then $\bar{D}_{1} \cap \bar{D}_{2} \cap \bar{D}_{3}$ is a singleton $\{q\}$, and for any $i \neq j, \bar{D}_{i} \cap \bar{D}_{j}$ is a simple continuous curve connecting $q$ and a point in $\partial \bar{D}$.
\end{lemma}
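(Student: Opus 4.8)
The plan is to read off the structure of the tiling from the boundary curves $\partial D_i$ and to reduce everything to the classical fact that a \emph{crosscut} of a disc (a simple arc meeting the boundary exactly in its two endpoints) splits the disc into two discs. First I would decompose each boundary curve. Since $\inter D_1, \inter D_2, \inter D_3$ are pairwise disjoint, every point of $\partial D_i$ that is not on $\partial D$ lies in $\inter D$ and is covered by the other two tiles; hence $\partial D_i = S_i \cup (D_i\cap D_j) \cup (D_i\cap D_k)$, where $\{i,j,k\}=\{1,2,3\}$. By hypothesis $S_i$ is a nondegenerate connected proper sub-arc of the Jordan curve $\partial D_i$, so its complementary arc $T_i := \cl(\partial D_i\setminus S_i)$ is a simple arc whose two endpoints coincide with those of $S_i$ (and hence lie on $\partial D$) and whose relative interior lies in $\inter D$. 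Thus each $T_i$ is a crosscut of $D$, and $T_i = (D_i\cap D_j)\cup(D_i\cap D_k)$.

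Next I would apply the crosscut theorem to $T_i$: it divides $D$ into two discs, one of which is $D_i$ while the other is $D_j\cup D_k$. (One checks, using that each $\inter D_\ell$ is connected and disjoint from $T_i$, that $\inter D_j$ and $\inter D_k$ lie on the side opposite to $\inter D_i$.) In particular every union $D_j\cup D_k$ of two tiles is again a disc, with boundary $S_j\cup S_k\cup T_i$. Now inside the disc $D_2\cup D_3$ the two nonoverlapping discs $D_2, D_3$ form a two-tile tiling, and the same complementary-arc argument carried out \emph{within} $D_2\cup D_3$ shows that $E_{23} := D_2\cap D_3$ is a single crosscut of $D_2\cup D_3$, i.e.\ a simple arc whose relative interior lies in $\inter(D_2\cup D_3)$ and whose two endpoints lie on $\partial(D_2\cup D_3)$. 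The analogous statements hold for $E_{12}$ and $E_{13}$.

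It then remains to locate the endpoints and the triple intersection $Q := D_1\cap D_2\cap D_3$. For this I would first establish the boundary combinatorics: that $S_1, S_2, S_3$ cover $\partial D$ and meet pairwise only in endpoints, so that they determine exactly three distinct junction points $v_{12}, v_{23}, v_{13}$. Consider $E_{23}$. Any endpoint of it lying on $S_2\cup S_3\subseteq\partial D$ must lie in $D_2\cap D_3\cap\partial D = \{v_{23}\}$, while any endpoint lying on $T_1$ must lie in $D_2\cap D_3\cap T_1 = Q$. Since $v_{23}$ is a point of the crosscut $E_{23}$ situated on $\partial(D_2\cup D_3)$, it is forced to be one endpoint; and as $v_{23}\notin T_1$, the other endpoint is a point $q\in Q$. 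Finally, because the relative interior of the crosscut $E_{23}$ lies in $\inter(D_2\cup D_3)$ and is therefore disjoint from $T_1\subseteq\partial(D_2\cup D_3)$, while $v_{23}\notin T_1$, we obtain $Q = E_{23}\cap T_1 = \{q\}$. Hence $Q$ is the single point $q$, and each $E_{ij}$ is a simple arc joining $q$ to the boundary point $v_{ij}$, as claimed.

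The step I expect to be the main obstacle is the connectivity and combinatorial bookkeeping rather than the crosscut theorem itself: verifying that each tile meets the boundary of its host two-tile disc in a \emph{connected} arc (so that the complementary-arc argument genuinely yields one crosscut and not several), and that the three arcs $S_i$ overlap only in their endpoints, giving three distinct junctions $v_{ij}$. Ruling out, for example, two tiles sharing a boundary sub-arc of positive length, or three tiles meeting along an arc, is exactly what guarantees that the $E_{ij}$ are honest simple arcs and that $Q$ collapses to a point.
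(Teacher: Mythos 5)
You should first be aware that the paper itself contains no proof of this statement: Lemma~\ref{lem:topology3} is recalled verbatim from \cite{KLV2020} (Lemma 2.3 there), and the paper's own generalization, Lemma~\ref{lem:topology}, invokes it as a black box in its final sentence. So your argument is necessarily a ``different route,'' and the only meaningful question is whether it is correct. In outline it is. The decomposition $\partial D_i=S_i\cup(D_i\cap D_j)\cup(D_i\cap D_k)$, the fact that $T_i=\cl(\partial D_i\setminus S_i)$ is a crosscut of $D$, the identification of $\inter(D_i)$ with one component of $\inter(D)\setminus T_i$ (so that $D_j\cup D_k$, the closure of the other component, is again a disc), the two-tile reduction inside $D_2\cup D_3$, and the final bookkeeping giving $D_1\cap D_2\cap D_3=E_{23}\cap T_1=\{q\}$ with each $E_{ij}$ an arc from $q$ to the junction $v_{ij}\in\partial D$ --- all of this is valid and yields exactly the conclusion of the lemma, for arbitrary (possibly non-rectifiable) Jordan boundaries, since only the Jordan curve theorem and its crosscut consequence are used.

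Two points need tightening, one of them substantive. First, a matter of order: the equality $T_i=(D_i\cap D_j)\cup(D_i\cap D_k)$ cannot be asserted where you assert it, because the inclusion $D_i\cap D_j\subseteq T_i$ requires that $D_j$ avoid the relative interior of $S_i$, i.e.\ that $S_i\cap S_j$ consist of common endpoints only --- and that is precisely what falls out of the crosscut theorem afterwards (once $\inter(D_j)$ lies in the far component, $S_j\subseteq\cl(\partial D\setminus S_i)$ follows). To apply the crosscut theorem you only need the trivial inclusion $T_i\subseteq(D_i\cap D_j)\cup(D_i\cap D_k)$, so the fix is a reordering; note that the distinctness of the three junctions $v_{12},v_{23},v_{13}$, which you use implicitly when claiming $v_{23}\notin T_1$, also comes out of this step. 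Second, and more importantly: the connectivity of the trace $D_2\cap\partial(D_2\cup D_3)$, which you correctly flag as the main obstacle, must \emph{not} be obtained by running the complementary-arc description inside $D_2\cup D_3$, because that trace equals $S_2\cup E_{12}$ and (since $E_{12}$ meets $\partial D$ only at $v_{12}$) its connectedness is essentially equivalent to $E_{12}$ being connected --- part of what is being proved; read naively, ``the same argument within $D_2\cup D_3$'' is circular. The non-circular argument is the interlacing one: if $a,b$ lay in different components of $D_2\cap\partial(D_2\cup D_3)$, both arcs of $\partial(D_2\cup D_3)$ between them would contain points of $D_3\setminus D_2$; in-curves of $D_2$ and of $D_3$ joining these two pairs of points are crosscuts of $D_2\cup D_3$ with interlaced endpoints, hence they must cross by the Jordan curve theorem, yet their relative interiors lie in the disjoint open sets $\inter(D_2)$ and $\inter(D_3)$ and their endpoints are distinct; a contradiction. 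This is exactly the technique the paper itself uses, via Remark~\ref{rem:incurve}, in the proof of Lemma~\ref{lem:topology}. With that argument inserted, and the reordering above, your proof is complete.
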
 

Our next lemma is a generalization of Lemma~\ref{lem:topology3}.

\begin{lemma}\label{lem:topology}
Let the disc $\bar{D}$ be decomposed into three discs $\bar{D}_1$, $\bar{D}_2$ and $\bar{D}_3$. For $i=1,2,3$, set $\bar{S}_i = \bar{D}_i \cap \partial \bar{D}$.
Then, with a suitable choice of indices, exactly one of the following holds (cf. Figure~\ref{fig:types}).
\begin{enumerate}
\item[(1)] $\bar{S}_3$ contains at most two points, and $\bar{S}_1$ and $\bar{S}_2$ are connected arcs whose union covers $\partial \bar{D}$.
\item[(2)] $\bar{S}_1$ is the union of two disjoint, connected, nondegenerate arcs, the sets $\bar{S}_2, \bar{S}_3, \bar{D}_1 \cap \bar{D}_2$, $\bar{D}_1 \cap \bar{D}_3$ are connected arcs, and $\bar{D}_2 \cap \bar{D}_3 = \emptyset$.
\item[(3)] $\bar{S}_2$, $\bar{S}_3$, $\bar{D}_1 \cap \bar{D}_2$, $\bar{D}_1 \cap \bar{D}_3$, $\bar{D}_2 \cap \bar{D}_3$ are connected arcs, $\bar{D}_1 \cap \bar{D}_2 \cap \bar{D}_3$ is a singleton $\{ q \}$, and $\bar{S}_1$ is either a connected arc, or the union of a connected arc and $\{ q\}$.
\end{enumerate}
\end{lemma}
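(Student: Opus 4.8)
The plan is to carry out a purely topological analysis of how the three tiles meet, never invoking rectifiability of boundaries but only connectivity, the Jordan curve theorem, and the non-crossing behaviour of in-curves from Remark~\ref{rem:incurve}. Throughout I would write $C_{ij} = D_i \cap D_j$ for the pairwise interfaces and $T = D_1 \cap D_2 \cap D_3$ for the triple set. First I would record the elementary facts that each $S_i$ is closed in $\partial D$, that $S_1 \cup S_2 \cup S_3 = \partial D$, and that $S_i \subseteq \partial D_i$ (a point of $D_i$ lying on $\partial D$ cannot be interior to $D_i$, since $\inter(D_i) \subseteq \inter(D)$). The decisive structural observation is that
\[
\inter(D) \setminus (C_{12} \cup C_{13} \cup C_{23}) = \inter(D_1) \sqcup \inter(D_2) \sqcup \inter(D_3),
\]
a disjoint union of three nonempty connected open sets; equivalently, the interior skeleton $N = C_{12} \cup C_{13} \cup C_{23}$ separates $\inter(D)$ into precisely three regions. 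This "exactly three regions" constraint is what drives the entire classification.

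Next I would set up two local building blocks. The first is a two-tile lemma: if a disc is the union of two discs with disjoint interiors, then their common part is a single arc whose relative interior lies in the interior of the ambient disc (the statement that cutting a disc along such an interface yields two discs). I would deduce this from the separation property above applied to two pieces, together with the in-curve crossing argument of Remark~\ref{rem:incurve} to exclude a disconnected interface. The second building block is already available: whenever all three $S_i$ are nondegenerate connected arcs, Lemma~\ref{lem:topology3} applies verbatim and places us in case~(3), with $T = \{q\}$ a genuine triple point and each $C_{ij}$ an arc joining $q$ to $\partial D$.

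With these in hand, the proof becomes a case analysis organised by the degeneracy type of the sets $S_i$ as closed subsets of the circle $\partial D$. A preliminary reduction is that no tile can meet $\partial D$ in three or more nondegenerate arcs, since its complement in $D$ would then have at least three components, each requiring a distinct further tile, which is impossible with only two remaining tiles. If some $S_i$, say $S_1$, contains no arc (it is empty, one point, or two points), then the other two tiles must carry all of $\partial D$, and using the three-region count together with the fact that $D_2 \cup D_3$ covers a neighbourhood of $\partial D$ I would show that $S_2$ and $S_3$ are connected arcs covering $\partial D$, which is case~(1). If instead every $S_i$ contains an arc but some $S_i$ is disconnected, then by the reduction exactly one of them, say $S_1$, is a union of two arcs while $S_2, S_3$ are single arcs; here $D_1$ runs across $D$, so $D_2$ and $D_3$ lie in different components of $D \setminus D_1$, forcing $C_{23} = \emptyset$ while $C_{12}, C_{13}$ are single arcs by the two-tile lemma, which is case~(2). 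The remaining possibility, all three $S_i$ connected arcs, is handled by Lemma~\ref{lem:topology3} and gives case~(3); the degenerate sub-case in which the triple point $q$ happens to lie on $\partial D$ is precisely what produces the alternative description "$S_1$ is a connected arc together with $\{q\}$".

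The main obstacle I anticipate is the rigorous control of the interfaces in the absence of any smoothness: proving that each nonempty $C_{ij}$ is genuinely a single arc rather than a wild or disconnected continuum, and that the number of arcs in each $S_i$ is bounded, so that the trichotomy is both exhaustive and mutually exclusive. The cleanest route is to avoid any ad hoc description of the arcs and instead phrase everything through the separation property of $N$ and repeated use of the Jordan curve theorem and Remark~\ref{rem:incurve}, deducing connectivity of the interfaces and finiteness of $T$ as consequences of the "exactly three regions" constraint; checking exhaustiveness and exclusivity then reduces to verifying that these combinatorial types are the only ones compatible with that constraint.
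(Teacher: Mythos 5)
Your overall framework (the three-region separation of $\inter(D)$, in-curve/Jordan-curve crossing arguments, a two-tile lemma for the interfaces, and Lemma~\ref{lem:topology3} for the generic case) is sound and in fact parallels the paper's own case analysis. The genuine gap is that your trichotomy never accounts for \emph{degenerate} (singleton) components of the $S_i$, and that is precisely the distinction on which cases (2) and (3) of the lemma turn. Consider the configuration $S_1 = K \cup \{q\}$, a nondegenerate arc plus one isolated boundary point (the right panel of Figure~\ref{fig:types} with $q \in \partial D$). Here every $S_i$ contains an arc and $S_1$ is disconnected, so this falls into your case B, where you conclude $C_{23} = \emptyset$ and type (2). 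That conclusion is false: the two complementary arcs of $S_1$ in $\partial D$ lie in $S_2$ and $S_3$ respectively, both have $q$ as an endpoint, and $S_2, S_3$ are closed, so $q \in D_1 \cap D_2 \cap D_3$; the configuration is of type (3). You expect the sub-case $q \in \partial D$ to be produced by Lemma~\ref{lem:topology3} inside your all-connected case C, but when $q$ is isolated in $S_1$ the set $S_1$ is disconnected, so the hypotheses of Lemma~\ref{lem:topology3} fail and case C never sees this configuration.

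The same example pinpoints the invalid inference. From the fact that $\inter(D_2)$ and $\inter(D_3)$ lie in different components $U_2, U_3$ of $D \setminus D_1$ you may only conclude $D_2 \subseteq \cl(U_2)$ and $D_3 \subseteq \cl(U_3)$, and these closures can meet on $\partial D_1$ --- in the configuration above they do meet, at $q$. What actually forces $D_2 \cap D_3 = \emptyset$, and what the paper uses, is nondegeneracy of \emph{both} components of $S_1$: then an in-curve of $D_1$ joining interior points of the two arcs avoids $D_2 \cup D_3$ entirely (endpoints included), and by the Jordan curve theorem it separates $D_2$ from $D_3$; if one component of $S_1$ is a singleton, every such separating curve must end at that singleton, which belongs to all three tiles. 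A correct version of your plan therefore has to split the disconnected case into sub-cases according to how many of the (at most two) components of $S_1$ are singletons, yielding (2), (3) and (1) respectively, which is exactly the paper's subdivision. Relatedly, your preliminary reduction bounds only the number of nondegenerate arcs in each $S_i$; you need the same region count applied to \emph{all} components (point components included), since otherwise the claim that an $S_i$ containing no arc contains at most two points is unjustified.
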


\begin{figure}[ht]
\begin{center}
 \includegraphics[width=0.97\textwidth]{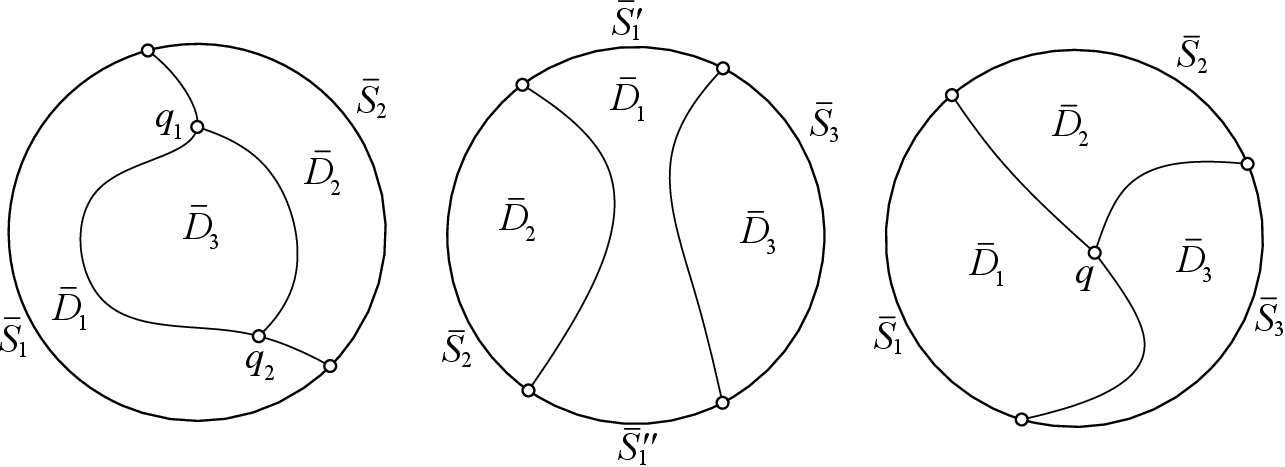}
 \caption{The topological types described in Lemma~\ref{lem:topology} with a Euclidean disc as $\bar{D}$. We note that the points $q_1$ and $q_2$ in the left panel, and $q$ in the right panel may lie on $\partial \bar{D}$.}
\label{fig:types}
\end{center}
\end{figure}

\begin{proof}
Assume that one of the $\bar{S}_i$, say $\bar{S}_1$, has more than one component, and let $q_1, q_2, \ldots, q_m$ be points of $\bar{S}_1$, in this cyclic order, contained in different components of $\bar{S}_1$. Let $r_1, r_2, \ldots, r_m$ be points in $ {\partial \bar{D}} \setminus \bar{S}_1$ such that $q_1, r_1, q_2, \ldots, q_m, r_m$ are in this cyclic order in ${\partial \bar{D}}$. Note that every $r_j$ belongs to $\bar{S}_2$ or $\bar{S}_3$, and no two of them belongs to the same set. Indeed, if, say, $r_{j_1}$ and $r_{j_2}$ belong to $\bar{S}_2$, where $j_1 \neq j_2$, then any in-curve in $\bar{D}_2$ connecting them, and any in-curve in $\bar{S}_1$ connecting $q_{j_1}$ and $q_{j_2}$ would cross, which is a contradiction.
Thus, we have $m = 2$, which also yields, by the same argument, that $\bar{S}_2$ and $\bar{S}_3$ are connected. If neither component of $\bar{S}_1$ is a singleton, then the closure of $(\partial \bar{D}_1) \setminus \bar{S}_1$ contains two disjoint, simple curves which, apart from their endpoints, are contained in $\inter(\bar{D})$. Since in this case $\bar{D}_2$ and $\bar{D}_3$ can be separated by an in-curve of $\bar{D}_1$ disjoint from $\bar{D}_2 \cup \bar{D}_3$, it follows by compactness that the two components of $\cl ((\partial \bar{D}_1) \setminus \bar{S}_1)$ coincide with $\bar{D}_1 \cap \bar{D}_2$ and $\bar{D}_1 \cap \bar{D}_3$, implying (2). If exactly one component of $\bar{S}_1$ is a singleton, a similar argument can be applied, implying (3). Finally, if both components of $\bar{S}_1$ are singletons, the conditions in (1) are satisfied with a suitable relabeling of the tiles.

Assume that all the $\bar{S}_i$ are connected. Since $\bar{S}_1 \cup \bar{S}_2 \cup \bar{S}_3 = \partial \bar{D}$ and every $\bar{S}_i$ is a simple connected arc properly contained in $\partial \bar{D}_i$, we have that at least two of the $\bar{S}_i$ contain more than one point. If one of them, say $\bar{S}_3$, contains at most one point, then (1) follows. If $\bar{S}_1$, $\bar{S}_2$ and $\bar{S}_3$ are nondegenerate, simple arcs, then the conditions of Lemma~\ref{lem:topology3} are satisfied, implying (3).
\end{proof}

\begin{definition}\label{defn:types}
Let $\{ \bar{D}_{1}, \bar{D}_{2}, \bar{D}_{3}\}$ be a tiling of the disc $\bar{D}$. If the discs $\bar{D}_1, \bar{D}_2, \bar{D}_3$ satisfy the conditions in ($k$) of Lemma~\ref{lem:topology} with $k=1$, $k=2$ or $k=3$, we say that the tiling is a \emph{Type $k$ decomposition} of $\bar{D}$. 
\end{definition}

\begin{remark}\label{rem:topfor2}
Assume that $\bar{D}$ is decomposed into two discs $\bar{D}_1, \bar{D}_2$, and for $i=1,2$, set $\bar{S}_i = \bar{D}_i \cap \partial \bar{D}$. Note that since the number of tiles is more than one, we have that no $\bar{S}_i$ coincides with $\partial \bar{D}$. Furthermore, by the argument in the proof of Lemma~\ref{lem:topology} we also have that $\bar{S}_1$ and $\bar{S}_2$ are connected. Motivated by this property, we call any tiling of $\bar{D}$ with two discs a Type 1 decomposition of $\bar{D}$.
\end{remark}

Lemmas~\ref{lem:finite_copies}-\ref{lem:cutcong} and Definitions~\ref{defn:multicurve}-\ref{defn:equidecomposable} are from \cite{KLV2020}.	
		
\begin{lemma}\label{lem:finite_copies}
Let $G$ and $C$ be simple curves. Then $G$ contains at most finitely many congruent copies of $C$ which are mutually disjoint, apart from possibly their endpoints.
\end{lemma}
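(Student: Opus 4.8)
The plan is to exploit the fact that $G$, being a simple curve, is the continuous injective image of a compact interval — I write $\gamma\colon[0,1]\to\Re^2$ for such a parametrization, a homeomorphism onto $G$ (if $G$ is a simple \emph{closed} curve, one parametrizes the circle instead, and the argument is identical). In particular $G$ is compact and $\gamma$ is uniformly continuous. Since the curves need not be rectifiable, length is unavailable to us; the idea is to measure each subarc by the length of its parameter interval and to control the geometry through uniform continuity rather than through arc length.

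First I would observe that any congruent copy $C'$ of $C$ lying in $G$ is automatically a subarc of $G$. Indeed, $C$ is connected and compact, hence so is $C'$, and therefore $\gamma^{-1}(C')$ is a compact connected subset of $[0,1]$, i.e.\ a closed subinterval $[a,b]$; thus $C'=\gamma([a,b])$. Moreover a congruence carries the two endpoints of $C$ to the endpoints $\gamma(a),\gamma(b)$ of $C'$, so two copies are disjoint apart from their endpoints precisely when the corresponding \emph{open} parameter intervals $(a,b)$ are disjoint.

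Next I would set $d=\diam(C)>0$ (nondegeneracy of $C$ is needed, since a single point admits infinitely many copies) and invoke the uniform continuity of $\gamma$: choose $\delta>0$ such that $|s-t|<\delta$ implies $|\gamma(s)-\gamma(t)|<d$. Any subarc $\gamma([a,b])$ with $b-a<\delta$ then has all of its chords shorter than $d$, so its diameter — attained by compactness — is strictly less than $d$; such a subarc cannot be a congruent copy of $C$. Hence every copy of $C$ in $G$ corresponds to a parameter interval $[a_i,b_i]$ with $b_i-a_i\ge\delta$.

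Finally, since the copies are pairwise disjoint apart from endpoints, the open intervals $(a_i,b_i)\subseteq[0,1]$ are pairwise disjoint, each of length at least $\delta$; consequently there are at most $\lfloor 1/\delta\rfloor$ of them, and the family is finite. The one place that requires care — the main, though minor, obstacle — is the bookkeeping of the second step: verifying that a congruent copy of $C$ inside $G$ really is a subarc with the expected pair of endpoints, and that ``disjoint apart from endpoints'' translates exactly into disjointness of the open parameter intervals. Once this identification is in place, the uniform-continuity estimate cleanly replaces the length argument that the non-rectifiability of the curves forbids.
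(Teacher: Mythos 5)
Your proof is correct, and there is in fact nothing in the paper to compare it against: the authors import this lemma from \cite{KLV2020} without reproducing a proof, so any verification must stand on its own, as yours does. Your argument---identifying each congruent copy of $C$ inside $G$ with a closed parameter interval via the homeomorphism $\gamma$, translating ``disjoint apart from possibly their endpoints'' into disjointness of the open parameter intervals, and then using uniform continuity of $\gamma$ together with $\diam(C)>0$ to force each such interval to have length at least $\delta$---is complete, and it correctly sidesteps the non-rectifiability issue that rules out any counting by arc length. Two of your side remarks are also exactly the right ones to make: nondegeneracy of $C$ is genuinely needed (and is implicit in the paper's notion of a simple curve, which is injective on a nondegenerate interval), and the cases in which $C$ is a simple \emph{closed} curve are vacuous or trivial, since a subarc $\gamma([a,b])$ of $G$ is never homeomorphic to a circle, so at most the single copy $C'=G$ can occur.
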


\begin{definition}\label{defn:multicurve}
 A \emph{multicurve}  (see also \cite{KurusaMF}) is
 a finite family of simple curves,
 called the \emph{members of the multicurve},
 which are parameterized on nondegenerate closed finite intervals,
 and any point of the plane belongs to at most one member,
 or it is the endpoint of exactly two members.
 If $\mathcal{F}$ and $\mathcal{G}$ are multicurves,
 $\bigcup \mathcal{F}= \bigcup \mathcal{G}$,
 and every member of $\mathcal{F}$ is the union of some members of $\mathcal{G}$,
 we say that $\mathcal{G}$ is a \emph{partition} of $\mathcal{F}$.
\end{definition}

\begin{definition}\label{defn:equidecomposable}
 Let $\mathcal{F}$ and $\mathcal{G}$ be multicurves.
 If there are partitions $\mathcal{F}'$ and $\mathcal{G}'$ of $\mathcal{F}$
 and $\mathcal{G}$, respectively, and a bijection
 $f\colon\mathcal{F}' \to \mathcal{G'}$ such that $f(C) \in \mathcal{G}'$ is
 congruent to $C$ for all $C \in \mathcal{F}'$,
 we say that $\mathcal{F}$ and $\mathcal{G}$ are \emph{equidecomposable}.
\end{definition}

\begin{lemma}\label{lem:graphs}
 If $\mathcal{F}$ and $\mathcal{G}$ are multicurves
 with $\bigcup \mathcal{F} = \bigcup \mathcal{G}$,
 then $\mathcal{F}$ and $\mathcal{G}$ are equidecomposable.
\end{lemma}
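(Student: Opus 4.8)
The plan is to reduce the equidecomposability of $\mathcal{F}$ and $\mathcal{G}$ to the existence of a single common partition, after which the required bijection can be taken to be the identity. Set $X = \bigcup \mathcal{F} = \bigcup \mathcal{G}$, and let $V \subset \Re^2$ be the set consisting of all endpoints of all members of $\mathcal{F}$ and of all members of $\mathcal{G}$; since both families are finite, $V$ is finite. As each member $C$ of $\mathcal{F}$ is a simple arc, $V \cap C$ is a finite set which cuts $C$ into finitely many subarcs; let $\mathcal{H}$ be the family of all these subarcs as $C$ ranges over $\mathcal{F}$. I claim $\mathcal{H}$ is a multicurve that simultaneously partitions both $\mathcal{F}$ and $\mathcal{G}$. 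Granting this, I take $\mathcal{F}' = \mathcal{G}' = \mathcal{H}$ and let $f \colon \mathcal{H} \to \mathcal{H}$ be the identity; each member is congruent to itself via the identity isometry, so $\mathcal{F}$ and $\mathcal{G}$ are equidecomposable by definition.

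First I would verify that $\mathcal{H}$ is a multicurve and a partition of $\mathcal{F}$. It is a finite family of nondegenerate simple arcs with $\bigcup \mathcal{H} = X$, and each member of $\mathcal{F}$ is, by construction, the union of the pieces into which it was cut. To check the defining point condition, one inspects the points of $V$: a point interior to a single member $C$ of $\mathcal{F}$ becomes, after cutting, the common endpoint of exactly two members of $\mathcal{H}$; a point that was a shared endpoint (resp. a loose endpoint) of $\mathcal{F}$ remains the common endpoint of exactly two (resp. one) members of $\mathcal{H}$; and since no point of $X$ lies on three members of $\mathcal{F}$ nor in the interior of two members, the same holds for $\mathcal{H}$. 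Hence $\mathcal{H}$ is a multicurve partitioning $\mathcal{F}$.

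The heart of the argument, and the step I expect to be the main obstacle, is showing that $\mathcal{H}$ also partitions $\mathcal{G}$, that is, that every member $C'$ of $\mathcal{G}$ is a union of members of $\mathcal{H}$. The finite set $V \cap C'$, which contains the endpoints of $C'$, cuts $C'$ into subarcs $c$ whose interiors avoid $V$; it suffices to show each such $c$ is a member of $\mathcal{H}$. The key local fact is that if $x$ lies in the interior of some $\mathcal{F}$-member $M$ and on no other member, then, by finiteness of $\mathcal{F}$ and compactness of its members, $x$ has a positive distance to every other member, hence a neighborhood meeting $X$ only in $M$; thus $X$ coincides with $M$ near $x$. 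Now since $\inter(c)$ contains no point of $V$, it contains no endpoint of any $\mathcal{F}$-member, so each of its points is interior to a unique $\mathcal{F}$-member; the assignment ``which member'' is therefore locally constant on the connected set $\inter(c)$, hence constant, say equal to $M$. Consequently $c \subseteq M$, and being a subarc of the simple arc $M$ whose endpoints lie in $V$ and whose interior avoids $V$, the arc $c$ is exactly one of the pieces into which $M$ was cut, i.e. $c \in \mathcal{H}$. Therefore $C'$ is a union of members of $\mathcal{H}$, as required.

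With both partition claims established, the choice $\mathcal{F}' = \mathcal{G}' = \mathcal{H}$ and $f = \mathrm{id}$ satisfies the definition of equidecomposability, proving the lemma. The only points needing extra care are degenerate possibilities, such as a member that is itself a simple closed curve or the coincidence of cut points; these are controlled by the nondegeneracy of members and the finiteness of $V$ built into the notion of a multicurve, and they do not affect the structure of the argument.
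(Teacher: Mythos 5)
Your proof is correct, but note that the paper itself gives no proof of this lemma: it is one of the statements imported verbatim from \cite{KLV2020} (see the sentence preceding Lemma~\ref{lem:finite_copies}). Your common-refinement argument --- cutting the members of $\mathcal{F}$ at the finite set of all endpoints of both families, verifying that the resulting family is a multicurve partitioning $\mathcal{F}$, and using the local-constancy step to show that each resulting piece of a member of $\mathcal{G}$ lies inside a single member of $\mathcal{F}$ --- is the standard proof of this statement and matches the argument in the cited reference, so there is nothing to correct.
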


\begin{lemma}\label{lem:cutcong}
If $\mathcal{F}$ and $\mathcal{G}$ are equidecomposable,
and their subfamilies $\mathcal{F}' \subseteq \mathcal{F}$
and $\mathcal{G}' \subseteq \mathcal{G}$ are equidecomposable,
then $\mathcal{F} \setminus \mathcal{F}'$ and $\mathcal{G} \setminus \mathcal{G}'$
are equidecomposable.
\end{lemma}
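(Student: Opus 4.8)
The plan is to treat equidecomposability as an additive, and ultimately cancellative, relation on multicurves, so that the two hypotheses may be ``subtracted.'' Write $\mathcal{F}\sim\mathcal{G}$ for equidecomposability. Everything rests on three elementary operations. First, if $\mathcal{P}$ is a partition of $\mathcal{F}$ and $\mathcal{F}'\subseteq\mathcal{F}$ is a subfamily, then by the multicurve condition no member of $\mathcal{P}$ can straddle a shared endpoint of two members of $\mathcal{F}$, so every member of $\mathcal{P}$ lies inside a single member of $\mathcal{F}$; hence the members of $\mathcal{P}$ contained in $\bigcup\mathcal{F}'$ form a partition $\mathcal{P}|_{\mathcal{F}'}$ of $\mathcal{F}'$ and $\mathcal{P}=\mathcal{P}|_{\mathcal{F}'}\sqcup\mathcal{P}|_{\mathcal{F}\setminus\mathcal{F}'}$. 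Second, any two partitions of the same multicurve have a common refinement, which is again finite since each has finitely many cut points. Third, a congruence bijection $\phi\colon\mathcal{P}\to\mathcal{Q}$ transports a refinement of $\mathcal{P}$ to a refinement of $\mathcal{Q}$ piece by piece. Combining the second and third operations gives transitivity of $\sim$: refine on the middle multicurve and pull the common refinement back along both congruence bijections.

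Using these operations I would reduce the statement to cancelling a \emph{literally common} family inside a single multicurve. Fix witnessing partitions $\mathcal{P}$ of $\mathcal{F}$ and $\mathcal{Q}$ of $\mathcal{G}$ with congruence bijection $\phi$. By the first operation the images $H=\phi(\mathcal{P}|_{\mathcal{F}'})$ and $H'=\phi(\mathcal{P}|_{\mathcal{F}\setminus\mathcal{F}'})$ are complementary sub-partitions of $\mathcal{Q}$, and $\mathcal{F}'\sim H$, $\mathcal{F}\setminus\mathcal{F}'\sim H'$. By transitivity $\mathcal{G}'\sim\mathcal{F}'\sim H$, so it suffices to prove $H'\sim\mathcal{G}\setminus\mathcal{G}'$. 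Every piece of $\{H,H'\}$ lies in a single member of $\mathcal{G}$, hence in $\mathcal{G}'$ or in $\mathcal{G}\setminus\mathcal{G}'$; classifying the pieces by this pair of alternatives splits them into four families $A,B,C,D$ with $H=A\cup B$, $\mathcal{G}'=A\cup C$, $\mathcal{G}\setminus\mathcal{G}'=B\cup D$, $H'=C\cup D$. Now $\mathcal{G}'\sim H$ reads $A\cup C\sim A\cup B$, and since $D$ is common to $\mathcal{G}\setminus\mathcal{G}'$ and $H'$ (adjoining a common family with the identity matching preserves $\sim$), the whole statement reduces to the cancellation $A\cup C\sim A\cup B\Rightarrow B\sim C$ of the literally common family $A$.

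For this cancellation I would run the Banach mapping (Schröder--Bernstein) argument for equidecomposability. The hypothesis supplies a bijection $\Theta\colon\bigcup(A\cup C)\to\bigcup(A\cup B)$ that is isometric on each of finitely many subarcs; combining $\Theta$ with the identity on the common $\bigcup A$ and performing the standard orbit analysis should assemble a bijection $\bigcup C\to\bigcup B$ out of finitely many restrictions of $\Theta$ and $\Theta^{-1}$, giving $B\sim C$. The bookkeeping of shared endpoints is harmless, since they form a finite set, and any re-cutting within a fixed union that the construction needs can be supplied by Lemma~\ref{lem:graphs}.

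The step I expect to be the main obstacle is guaranteeing that this cancellation is realized by a \emph{finite} partition into \emph{simple arcs}, as the definition of equidecomposability demands, rather than by an arbitrary measurable rearrangement. The naive Schröder--Bernstein set $C=\bigcup_{n}C_n$ is a countable union of images whose trace on a single arc need not be a finite union of subarcs, so I must show the orbit structure of $\Theta$ terminates after finitely many steps. This is exactly where Lemma~\ref{lem:finite_copies} should enter: an infinite forward orbit remaining inside the bounded set $\bigcup A$ would produce infinitely many congruent copies of a small subarc, and turning this into a contradiction---while correctly handling orbits whose copies overlap rather than being disjoint, and exploiting the rigidity of the finitely many isometries occurring in $\Theta$---is the delicate point on which the argument hinges.
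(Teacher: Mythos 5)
This paper does not actually prove Lemma~\ref{lem:cutcong}: it is imported verbatim, together with Lemmas~\ref{lem:finite_copies} and~\ref{lem:graphs} and Definitions~\ref{defn:multicurve}--\ref{defn:equidecomposable}, from \cite{KLV2020}, so your argument can only be measured against the cited source, not against a proof in this text. Judged on its own, the first half of your proposal is correct and cleanly organized: every member of a partition of a multicurve lies inside a single member (your straddling argument, made precise via interior points, is right), so partitions restrict to subfamilies; two partitions of the same multicurve have a finite common refinement; this gives transitivity of $\sim$; and the four-family bookkeeping $A,B,C,D$ validly reduces the lemma to the cancellation statement
\[
A \sqcup C \sim A \sqcup B \ \Longrightarrow \ B \sim C .
\]

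The genuine gap is that you never prove this cancellation, and it is not a routine remainder: it is the entire content of the lemma. Cancellation fails for equidecomposability under plane isometries in general --- by the Sierpi\'nski--Mazurkiewicz paradox there is a set $E = E_1 \sqcup E_2$ with $E_1 \cong E$ and $E_2 \cong E$, so $A \sqcup C \sim A \sqcup B$ holds with $A = E_1$, $C = E_2$, $B = \emptyset$, yet $C \not\sim B$ --- so the implication can only come from the special structure of multicurves (finitely many pieces, each a simple arc), which is exactly the finiteness you defer. Your sketch names the right ingredients (a Schr\"oder--Bernstein flow, boundedness of exit times, Lemma~\ref{lem:finite_copies}), but ends by stating that handling overlapping copies and the rigidity of the finitely many isometries ``is the delicate point on which the argument hinges''; that is an acknowledgment that the decisive step is missing, not a proof of it. Note also that Lemma~\ref{lem:finite_copies} as stated only forbids infinitely many \emph{pairwise disjoint} congruent copies of one \emph{fixed} curve, whereas an unbounded orbit of your flow produces copies of varying, shrinking itinerary subarcs that may heavily overlap (think of an irrational rotation acting on subarcs of a circle), so invoking that lemma is not enough: bridging the mismatch --- for instance via compactness of the nested exit-time sets together with an analysis of an isometry mapping a subarc of a simple arc into that same arc --- requires a genuine additional idea that the proposal does not supply. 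Until that is done, what you have is a correct reduction plus a program, not a proof.
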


We finish with a remark and a definition.

\begin{remark}\label{rem:overlapping}
Let $\{ D_1, D_2, D_3 \}$ be a monohedral tiling of the regular $n$-gon $P$ with unit side-length and $n \geq 5$. For $i=1,2,3$, set $S_i = D_i \cap \partial P$.
Note that for any $i \neq j$, $g_{ij}(S_i) \subset (\partial \conv (D_j)) \cap ( \partial D_j)$. Furthermore, we have the following:
\begin{enumerate}
\item[(1)] If $S_i^* \subseteq S_i$ and $S_j^* \subseteq S_j$ are maximal nondegenerate segments in $S_i$ and $S_j$, respectively, such that the interiors of $S_i^*$ and $g_{ji}(S_j^*)$ intersect, then $S_i^* = g_{ji}(S_j^*)$.
\item[(2)] If some vertex $p_t$ of $P$ lies in the interiors of both $S_i$ and $g_{ji}(S_j)$, then $S_i = g_{ji}(S_j)$, and $P=g_{ji}(P)$.
\item[(3)] If $S_i \cap g_{ji}(S_j)$ contains a segment of unit length, then  $S_i = g_{ji}(S_j)$, and $P=g_{ji}(P)$.
\end{enumerate}
If the interiors of $S_i$ and $g_{ji}(S_j)$ are disjoint, we say that $S_i$ and $g_{ji}(S_j)$ are \emph{nonoverlapping}.
Based on our above observations and Lemma~\ref{lem:topology}, if $S_i$ and $g_{ji}(S_j)$ overlap but they do not coincide, then their intersection contains at most two connected components, each of which is either a single point or a nondegenerate segment of length strictly less than one. In this case we say that $S_i$ and $g_{ji}(S_j)$ are \emph{slightly overlapping} (cf. Figure~\ref{fig:overlap}).
%Note that if $S_i$ and $g_{ji}(S_j)$ are neither nonoverlapping nor slightly overlapping, then they coincide.
We observe that $S_i$ and $S_j = g_{ij}(S_i)$ are nonoverlapping, slightly overlapping and equal if and only if $g_{il} (S_i)$ and $g_{jl}(S_j)$ are nonoverlapping, slightly overlapping and equal, respectively, for an arbitrary value of $l$.

\end{remark}

\begin{figure}[ht]
\begin{center}
 \includegraphics[width=0.9\textwidth]{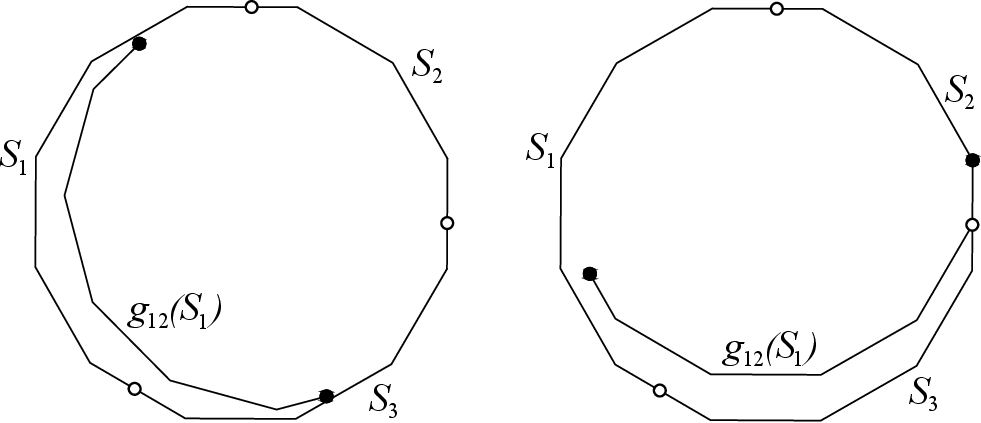}
 \caption{An illustration for Remark~\ref{rem:overlapping}. In the left panel $S_2$ and $g_{12}(S_1)$ do not overlap; in the right panel $S_2$ and $g_{12}(S_1)$ slightly overlap. The endpoints of the arcs $S_i$ are denoted by empty circles, and the endpoints of $g_{12}(S_1)$ are denoted by full circles.}
\label{fig:overlap}
\end{center}
\end{figure}

%\begin{remark}
%Using the notation of Remark~\ref{rem:overlapping}, we observe that if $S_j$ and $g_{ij}(S_i)$ slightly overlap, then their intersection consists of at most two connected parts, each being segment and the endpoints of any such segment $\cap g_{ij}(S_i)$ are a vertex of $P$ (coinciding with an endpoint of $g_{ij}(S_i)$) and the image of a vertex of $P$ by $g_{ij}$ (coinciding with an endpoint of $S_j$), as in the right panel of Figure~\ref{fig:overlap}.
%\end{remark}

\section{Proof of Theorem~\ref{thm:main}}\label{sec:proof}

By Lemma~\ref{lem:topology} and Remark~\ref{rem:topfor2}, it is sufficient to prove the theorem for Type 1, Type 2 and Type 3 decompositions of $P$.
In the following, we present the proof for each type in a separate subsection. Throughout the proof, we assume that no $D_i$ contains diametrically opposite points of $P$, as otherwise the assertion readily follows from Lemma~\ref{lem:diameter}.

\subsection{Proof for Type $1$ decompositions}\label{subsec1}

We choose our notation in such a way that $S_1$ and $S_2$ are nondegenerate connected arcs in $\partial P$ whose union is $P$, and $S_3$, if it exists, contains at most two points. Observe that in this case $n$ is even, as otherwise $S_1$ or $S_2$ contains at least $\frac{n+1}{2}$ vertices of $P$, including a pair of diametrically opposite points of $P$. Consider the sets $S_1$ and $S_1'=g_{21}(S_2)$. By Remark~\ref{rem:overlapping}, we distinguish three cases.

\smallskip

\emph{Case 1}, $S_1$ and $S_1'$ do not overlap.\\
In this case they are nonoverlapping arcs in the boundary of $\conv (D_1)$ whose total length is $\perim(P)$, which implies that $\conv(D_1)=P$ and $\partial (\conv D_1) = S_1 \cup S_1'$. On the other hand, since in this case $S_1 \cup S_1'$ is a simple, closed curve, we have $D_1 = \conv (D_1)=P$, which contradicts the assumption that $k > 1$.

\smallskip

\emph{Case 2}, $S_1$ and $S_1'$ slightly overlap.\\
Let $L$ be a sideline of $P$ containing at least an interior point of $S_1' \cap S_1$, and let $L'$ be the supporting line of $P$ parallel to $L$ which is different from $L$. Let $G_1$ and $G_2$ denote the two components of $\cl (\partial P \setminus (L \cup L'))$. Clearly, since $L$ contains a common endpoint of $S_1$ and $S_2$, at least one of $G_1$ and $G_2$ contains no endpoint of $S_1$ and $S_2$ in its interior, and hence, we may assume that e.g. $G_2 \subset S_2$. Then the facts that $S_1'$ and $S_1$ are slightly overlapping and $S_1' \subset P$ yield that $L'$ also contains a point of $S_1'$, and $G_1 \subset S_1$. Furthermore, since in this case $S_1 \cup S_1'$ form simple closed curve(s) in $\partial D_1$, we have that $D_1 = \conv(S_1 \cup S_1')$. Let $q$ and $q'$ be the midpoints of the sides of $P$ on $L$ and $L'$, respectively, and observe that the translate $G_2'$ of $G_2$ whose endpoints are $q$ and $q'$ is contained in $D_1$. Thus, the area of $D_1$ is greater than or equal to the area of $\conv (G_1 \cup G_2')$, and the area of the latter region is strictly greater than $\frac{\area(P)}{2}$. This contradicts the fact that the examined tiling of $P$ is monohedral.

\smallskip

\emph{Case 3}, $S_1 = S_1'$.\\
In this case $g_{21}$ is either the reflection about the line $L$ through the two endpoints of $S_1$ and $S_2$, or it is the reflection about $o$. This implies Theorem~\ref{thm:main} for $k=2$. Furthermore, if $k=3$, then $D_3 = \cl (P \setminus (D_1 \cup D_2))$ is symmetric to $L$ or $o$, respectively, and $P$ has an even number of sides.

Assume that $k=3$ and $g_{21}$ is the reflection about $o$. Then, since $D_1$, $D_2$ and $D_3$ are all congruent, it follows that both $D_1$ and $D_2$ are centrally symmetric. As $D_1, D_2 \subset P$ we also have that the centers of $D_1$ and $D_2$ are contained on the line through $o$ parallel to the two sides of $P$ containing the common endpoints of $S_1$ and $S_2$. Let these two sides of $P$ be $E$ and $E'$, and let the centers of symmetry of $D_1$ and $D_2$ be $c_1$ and $c_2$, respectively. From the properties of central symmetry and the fact that $S_1 = S_1'$, we have that for $i=1,2$, $E \cap S_i$ and $E' \cap S_i$ are segments of length $1/2$. Furthermore, for $i=1,2$ the union of $S_i$ and its reflection about $c_i$ is a simple closed convex curve in $\partial D_i$, implying that its convex hull is $D_i$. Thus, $D_1$ and $D_2$ overlap; a contradiction.

Finally, assume that $k=3$ and $g_{21}$ is the reflection about the line $L$ passing through the common endpoints of $S_1$ and $S_2$. Since $L$ is a symmetry line of $P$ and no $D_i$ contains diametrically opposite points of $P$, it follows that $L$ passes through the midpoints of two opposite sides $E, E'$ of $P$.
Furthermore, since $D_3$ is symmetric to the line $L$, we obtain that $D_1$ and $D_2$ have lines of symmetry, which we denote by $L_1$ and $L_2$, respectively. Since both discs are contained in the infinite strip bounded by the two sidelines of $P$ through $E$ and $E'$, $L_1$ and $L_2$ are parallel to $L$, or coincide with the line $L^*$ through $o$ perpendicular to $L$.

Assume that one of $L_1$ and $L_2$, say $L_1$, is parallel to $L$, and let $S'$ denote the reflection of $S_1$ about $L_1$. Since $S_1 \cup S'$ is a simple, closed curve in $\partial D_1$, we have $S_1 \cup S' = \partial D_1$, which yields that $D_1 = \conv (S_1 \cup S')$. On the other hand, as the endpoints of $S_1$ are midpoints of two opposite sides of $P$, from this an elementary computation shows that $\area(D_1) > \frac{\area(P)}{3}$, a contradiction. Thus, we have $L_1 = L^*$, and we remark that our argument shows that \emph{any} line of symmetry of $D_1$ coincides with $L^*$, and, applying this argument, we obtain the same statement for $D_2$. On the other hand, if both $D_1$ and $D_2$ are symmetric to $L^*$, then the same holds for $D_3$. Hence, $D_3$ is symmetric to both $L$ and $L^*$, which yields that $D_1$ (resp., $D_2$) has a line of symmetry different from $L^*$, which contradicts our previous observation.

\subsection{Proof for Type $2$ decompositions}\label{subsec2}

We assume that $S_1$ is disconnected, and denote the two components of $S_1$ by $S_1'$ and $S_1''$.
We distinguish three cases.

\emph{Case 1}, both $S_1'$ and $S_1''$ contain vertices of $P$.\\
Without loss of generality, we may assume that the vertices of $P$ in $S_1'$ are $p_1, p_2, \ldots, p_m$ for some $1 \leq m \leq n-1$.
First, we show that $n$ is even. Suppose for contradiction that $n= 2t+1$ for some $t \geq 2$.
%If $S_2$ or $S_3$, say $S_3$, does not contain vertices of $P$, then $S_1$ or $S_2$ contains more than $\frac{n}{2}$ vertices, which yields that one of them contains diametrically opposite points of $P$, contradicting our assumptions. Thus, we have that both $S_2$ and $S_3$ contain vertices of $P$. On the other hand,
Then, since $S_1$ contains no diametrically opposite points of $P$, we have that 
$p_{t+1}, p_{t+2}, \ldots, p_{t+m+1}$ belong to the same set $S_i \neq S_1$. Without loss of generality, we may assume that they, and also $p_{m+1}$, belong to $S_2$.  
This yields that $p_{m+1},p_{m+2}, \ldots, p_{m+t+1}$ belong to $S_2$, and thus, $S_2$ contains at least $t+1$ vertices of $P$, which contradicts our assumption that $D_2$ does not contain diametrically opposite points of $P$. Thus, we have that $n$ is even.

Let $n=2t$ for some $t \geq 3$. Similarly like in the previous paragraph, we have that $p_{t+1}, \ldots, p_{t+m}$ are not points of $S_1''$, and thus, they all belong to $S_2$ or all belong to $S_3$. Without loss of generality, assume that they belong to $S_2$. Thus, $p_{m+1}, \ldots, p_{m+t} \in S_2$. Since no $S_i$ contains diametrically opposite points of $P$, we also have $m \leq t$ and $p_{m+t+1} \in S_1''$. This implies that $l(S_3) < l(S_2)$. Since neither $S_1$ nor $S_2$ contains diametrically opposite vertices of $P$, we also have that the endpoints of $S_2$ are interior points of two sides of $P$. The facts that $S_2$ contains exactly $t$ vertices of $P$ and $S_1$ is disconnected yield also that $S_2 \neq g_{12}(S_1)$ and $S_2 \neq g_{32}(S_3)$. Furthermore, $g_{32}(S_3)$ and $S_2$ are not slightly overlapping, since otherwise $S_3$ and $g_{23}(S_2)$ are slightly overlapping (cf. Remark~\ref{rem:overlapping}), which contradicts the fact that in this case the side of $P$ opposite the overlap is contained in $S_2$. By a similar argument, $S_{3}$ and $g_{13}( S_{1})$ do not slightly overlap. Thus, we have that either $S_1$, $g_{21}(S_2)$ and $g_{31}(S_3)$ are mutually nonoverlapping, or $S_1$ and $g_{21}(S_2)$ slightly overlap.

If $S_1$, $g_{21}(S_2)$ and $g_{31}(S_3)$ are mutually nonoverlapping, then their total length is equal to $\perim(P)$, implying that $\perim(\conv(D_1)) \geq \perim(P)$. This yields that $D_1 = \conv (D_1) = P$, which contradicts our assumption that $k = 3$ for any Type 2 decomposition of $P$. Thus, the only possibility left is that $S_1$ and $g_{21}(S_2)$ slightly overlap. Since the endpoints of $S_2$ are interior points of two opposite sides of $P$, this yields that $g_{21}(S_2)$ contains at least one endpoint of $S_2$. On the other hand, since apart from its endpoints, no point of $S_1$ may belong to $D_2$, we also have that $g_{21}(S_2)$ contains both endpoints of $S_2$, and also that it is a translate of $S_2$. Let the endpoints of $S_2$ on $[p_m,p_{m+1}]$ and $[p_{m+t},p_{m+t+1}]$ be $q$ and $q'$, respectively. Then $g_{21}([p_{m+1},q])$ is either $[p_m,q]$ or $[p_{m+t+1},q']$, which yields that $q$ is the midpoint of $[p_m,p_{m+1}]$ and $q'$ is the midpoint of $[p_{m+t},p_{m+t+1}]$ (cf. Figure~\ref{fig:type2_case1}). On the other hand, since $g_{21}(S_2) \subset \partial D_1$, it separates $D_2$ from $D_1 \cup D_3$. In other words, $D_2$ is the region bounded by the union of $S_2$ and the part of $g_{21}(S_2)$ connecting $q$ and $q'$. But this and the fact $S_3$ is not empty yields that $D_1$ is the translate of $D_2$ by the vector $q-p_{m+1}$, and hence, $D_3 = \cl (P \setminus (D_1 \cup D_2))$ is not congruent to $D_1$ and $D_2$; a contradiction.

\begin{figure}[ht]
\begin{center}
 \includegraphics[width=0.4\textwidth]{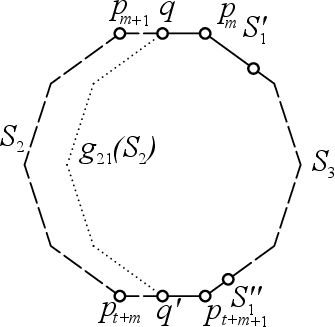}
 \caption{An illustration for Case 1 in Subsection~\ref{subsec2}, where $t=4$. In the picture $S_1'$ and $S_1''$ are drawn with solid, $S_2$ and $S_3$ with dashed, and $g_{21}(S_2)$ with dotted lines.}
\label{fig:type2_case1}
\end{center}
\end{figure}

\emph{Case 2}, exactly one of $S_1'$ and $S_1''$ contains a vertex of $P$.\\
Let the vertices of $P$ in $S_1'$ be $p_1, p_2, \ldots, p_m$ for some $1 \leq m < \frac{n+1}{2}$.

First, we consider the case that $n$ is odd, namely that $n=2t+1$ for some integer $t \geq 2$.
Observe that since the diameter graph of the vertex set of $P$ is an odd cycle, and hence, it cannot be colored with two colors, every $S_i$ contains a vertex of $P$ in its interior.
We show that the side of $P$ containing $S_1''$ is opposite a vertex in $S_1'$. Indeed, suppose for contradiction that $[p_{t+1},p_{t+2}], \ldots, [p_{t+m},p_{t+m+1}]$ are disjoint from $S_1''$. Then they all belong to the same $S_i$, and thus, we may assume that $p_{m+1}, \ldots, p_{t+m+1}$ belong to $S_2$. But then $S_2$ contains diametrically opposite vertices of $P$, which contradicts our assumption. Hence, we have that for some $1 \leq i \leq m$,  $S_1'' \subset [p_{i+t},p_{i+t+1}]$.

Note the fact that $S_1$ is disconnected implies that $g_{1i}(S_1)$ does not coincide with $S_i$. Assume that, say, $g_{12}(S_1)$ slightly overlaps $S_2$. Then $g_{12}$ is the composition of a symmetry of $P$ and a (nondenegerate) translation parallel to a side of $P$, which contradicts the fact that $g_{12}(S_1) \subset P$.
Thus, we have that $S_1$ does not overlap $g_{21}(S_2)$ and $g_{31}(S_3)$. Without loss of generality, we may assume that $l(S_2) \geq l(S_3)$.
For $i=2,3$, let $T_i$ denote the segment connecting the endpoints of $S_i$, $K_i = \conv(S_i)$ and $C_i = \conv(D_1) \cap K_i$ (cf. Figure~\ref{fig:type2_case2}).
Recall that $g_{21}(S_2)$ and $g_{31}(S_3)$ are contained in $\partial (\conv (D_1))$ and they do not overlap $S_1$. Since $S_2$ and $S_3$ contain vertices of $P$, we also have that they do not overlap $T_2$ and $T_3$. Thus, in particular, $C_2$ or $C_3$ is a plane convex body with perimeter at least $l(T_2) + l(S_2)$ or $l(T_3) + l(S_2)$, respectively. As $\perim(K_i) = l(S_i) + l(T_i)$ for $i=2,3$, this yields that $g_{21}(S_2)$ coincides with $S_2$ or $S_3$, a contradiction. %{\corr This works only if $S_1''$ is a nondegenerate segment.}

\begin{figure}[ht]
\begin{center}
 \includegraphics[width=0.37\textwidth]{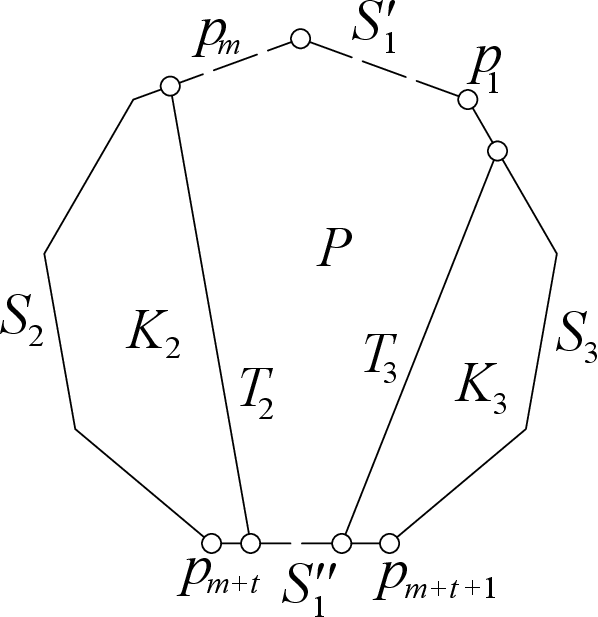}
 \caption{An illustration for Case 2 in Subsection~\ref{subsec2}, where $t=4$ and $m=2$. In the picture $S_1'$ and $S_1''$ are drawn with dashed, and $S_2$ and $S_3$ are drawn with solid lines.}
\label{fig:type2_case2}
\end{center}
\end{figure}

In the remaining part of Case 2, we assume that $n$ is even, i.e. $n=2t$ for some $t \geq 3$. Assume that $S_1', S_2, S_1'', S_3$ are in counterclockwise direction on $\partial P$. Note that at least one of $S_2$ and $S_3$ contains vertices of $P$. Furthermore, similarly to the $n$ odd case, the fact that neither $S_2$ nor $S_3$ contains diametrically opposite points yields that there are points $q' \in S_1'$ and $q'' \in S_1''$ on opposite sides of $P$.

Clearly, $g_{12}(S_1) \neq S_2$ as $S_1$ is disconnected. Assume that they are slightly overlapping. Then $g_{12}(q')$ and $ g_{12}(q'')$ lie on opposite sides of $P$, and hence, they belong to $S_2$. Thus, $S_2$ contains $t$ vertices of $P$, and $q',q''$ and their images under $g_{12}$ are on the same two sides of $P$. This yields that $g_{12}$ is either a translation parallel to these sides, or the composition of such a translation with a reflection to a line parallel or perpendicular to these sides, or the origin. Let these two sides be $E'$ and $E''$ with $q' \in E'$ and $q'' \in E''$. Then $ g_{12}((E' \cup E'')\cap S_1) = (E' \cup E'')\cap S_2$. Since $E' \cap S_1$, $E' \cap S_2$, $E'' \cap S_2$ and $E'' \cap S_1$ are in counterclockwise order on $\partial P$, we have that  $g_{12}$ is a translation parallel to $E'$, or its composition with the reflection about the line through $o$ and parallel to $E'$. But both cases contradict the fact that $E'' \cap (S_1 \cup S_2)$ is strictly shorter than $E' \cap (S_1 \cup S_2) = E'$.

We have obtained that $g_{12}(S_1)$ and $S_2$ do not overlap. It follows similarly that $g_{13}(S_1)$ and $S_3$ do not overlap, or equivalently, that $g_{21}(S_2)$ and $g_{31}(S_3)$ do not overlap $S_1$. But in this case we may apply the argument used for $n$ odd.

\emph{Case 3}, neither $S'_1$ nor $S''_1$ contains a vertex of $P$.\\
Recall that by the definition of Type 2 configuration (cf. Lemma~\ref{lem:topology}), both $S_1'$ and $S_1''$ are nondegenerate segments.

As we remarked in Case 2, if $n$ is odd, then the diameter graph of the vertex set of $P$ is an odd cycle, which is not $2$-colorable. But this contradicts the assumptions that none of $S_1$, $S_2$, $S_3$ contains diametrically opposite vertices of $P$, and $S_1$ does not contain a vertex of $P$. Thus, the condition of Case 3 is satisfied only if $n$ is even, and $S_1'$ and $S_1''$ lie on opposite sides of $P$. Let these sides of $P$ be $E'$ and $E''$ with $S_1' \subset E'$.

Clearly, we have $g_{21}(S_2) \neq S_1$. Consider the case that $g_{21}(S_2)$ slightly overlaps $S_1$. Then the endpoints of $g_{21}(S_2)$ lie on $E'$ and $E''$, which yields that either $D_2$ is the region bounded by $S_2 \cup \left( g_{21}(S_2) \setminus (E' \cup E'') \right)$, or $D_3$ is the region bounded by $S_3\cup \left( g_{21}(S_2) \setminus g_{21} (E' \cup E'') \right)$. From these two cases we obtain $\area(D_2) < \frac{1}{3} \area(P)$ and $\area(D_3) < \frac{1}{3} \area(P)$, respectively, which contradicts the fact that the tiling is monohedral. Thus, we are left with the case that $g_{21}(S_2)$ and $S_1$ do not overlap. Similarly, we obtain that $g_{31}(S_3)$ and $S_1$ do not overlap. But then $S_1$, $g_{21}(S_2)$ and $g_{31}(S_3)$ are mutually nonoverlapping arcs in $\partial (\conv (D_1))$ with total length equal to $\perim(P)$, implying that $D_1 = \conv(D_1) = P$; a contradiction.

\subsection{Proof for Type $3$ decompositions}\label{subsec:type3}

The proof presented in this subsection roughly follows the structure of the proof in \cite{KLV2020} with some of the arguments borrowed from there; in particular, depending on the number of coinciding arcs among $S_1, g_{21}(S_2)$ and $g_{31}(S_3)$, we distinguish three cases.

\emph{Case 1}, no two of $S_1, g_{21}(S_2)$ and $g_{31}(S_3)$ coincide.\\
If no two of these arcs overlap, then the fact that their total length is equal to $\perim(P)$ yields that $D_1 = \conv (D_1) = P$; a contradiction.
Thus, we have that at least one pair among them overlaps. Before proceeding further, we use this observation to show that all of $S_1$, $S_2$ and $S_3$ contain at least one vertex of $P$ different from their endpoints. Indeed, suppose for contradiction that one of them, say $S_1$, contains no vertex of $P$ in its interior. Then, since $D_1, D_2$ and $D_3$ contain no diametrically opposite points of $P$, we have that $n$ is even, and that both $S_2$ and $S_3$ contain a vertex of the side of $P$ that contains $S_{1}$ and interior points of the opposite side. Thus, if $g_{21}(S_2)$ and $S_1$ slightly overlap, then $S_1$ intersects a pair of opposite edges of $P$ in nondegenerate segments and the configuration is not Type $3$; a contradiction. The cases that $g_{31}(S_3)$ slightly overlaps $S_1$ or $g_{21}(S_2)$ can be eliminated by a similar argument. Thus, we obtain that $S_1$, $g_{21}(S_2)$ and $g_{31}(S_3)$ do not overlap; a contradiction. Hence, in the remaining part of Case 1 we assume that all of $S_1$, $S_2$ and $S_3$ contain vertices of $P$ different from their endpoints.

Consider the case that there are at least two overlapping pairs among $S_1, g_{21}(S_2)$ and $g_{31}(S_3)$; without loss of generality, we may assume that $g_{21}(S_2)$ and $g_{31}(S_3)$ slightly overlap $S_1$. Then, for $i=2,3$, $g_{1i}(S_1)$ slightly overlaps $S_i$, which, combining it with the fact that the angles of $P$ are obtuse, implies that two of $\partial D_1, \partial D_2$ and $\partial D_3$ cross; a contradiction.
%Observe that for $i = 2, 3$, $D_i \subseteq P$, which yields that $D_{1} = g_{i1} (D_{i}) \subseteq g_{i1} (P)$. Thus, $D_1 \subseteq Q= P \cap g_{21}(P) \cap g_{31}(P)$, and $Q$ contains $S_1$, $g_{21}(S_2)$ and $g_{31}(S_3)$ on its boundary. The total number of full sides of $P$ in the arcs $S_1$, $S_2$ and $S_3$ is at least $(n-3)$, and, by the definition of slight overlap, congruent copies of these sides appear on the boundary of $Q$. Hence, $\partial Q$ contains at least $(n-3)$ unit segments. On the other hand, $Q$ is the intersection of three translates of $P$, such that for both $g_{21}(P)$ and $g_{31}(P)$ the translation vector is parallel to a side of $P$, which implies that $Q$ contains at most $(n-4)$ unit segments on its boundary; a contradiction.

We are left with the case that exactly one pair of $S_1, g_{21}(S_2)$ and $g_{31}(S_3)$ overlaps. Without loss of generality, we may assume that $S_1$ and $g_{21}(S_2)$ overlap. Let us choose our notation such that the vertices of $P$ on $S_1$ are $p_1, p_2, \ldots, p_m$, and $S_1, S_2, S_3$ are in counterclockwise order around $P$. For any $i \neq j$, let $q_{ij}$ be the common endpoint of $S_i$ and $S_j$. We assume that $q_{23} \in [p_l,p_{l+1}]$ with $q_{23} \neq p_{l+1}$. By our assumption,
we have that $g_{21}(q_{12})$ or $g_{21}(q_{23})$ lies in the interior of $S_1$. Depending on which one of $q_{12}$ and $q_{23}$ lies on which side of $S_1$, we distinguish four cases. 

If $g_{21}(q_{12})$ lies in the interior of $S_1$ and $g_{21}(q_{12}) \in [p_m,p_{m+1}]$, then $q_{12}$ is the midpoint of $[p_m,p_{m+1}]$ and $g_{21}([q_{12},p_{m+1}]) = [q_{12},p_m]$. This implies that $\partial D_1$ and $\partial D_2$ cross at $q_{12}$; a contradiction.

Assume that $g_{21}(q_{12})$ lies in the interior of $S_1$ and $g_{21}(q_{12}) \in [p_n,p_1]$. Then $g_{21}(q_{12}) = p_1$, $g_{21}(p_{m+1})=q_{13}$ (or equivalently, $g_{12}(p_1)=q_{12}$ and $g_{12}(q_{13})=p_{m+1}$), and $q_{12}$ and $q_{13}$ are interior points of $[p_m,p_{m+1}]$ and $[p_n,p_1]$, respectively. Note that $\conv (D_2) \subset P$ implies that $\conv(D_1) \subset g_{21}(P)$. On the other hand, $g_{21}(P)$ is the translate of $P$ by the vector $q_{13}-p_n$. Let $P_0 = P \cap (q_{12}-p_n + P)$ (cf. Figure~\ref{fig:type3_case1}). Then $P_0$ is a convex polygon such that each one of its sides is parallel to some side of $P$. Let $C=S_1 \cup g_{21}(S_2)$, and observe that $C$ and $g_{31}(S_3)$ are non-overlapping curves in $\partial (\conv(D_1))$.
If $q_{23}$ is not a vertex of $P$, then the total turning angle along the curves $C$ and $g_{32}(S_3)$ is $2\pi$, which implies that $\partial (\conv (D_1))$ is the union of $C$ and $g_{31}(S_3)$ and possibly two segments such that the lines through them contain segments from both $C$ and $g_{32}(S_3)$; this contradicts the fact that $\conv(D_1)$ is contained in $P_0$. If $q_{23}$ is a vertex of $P$, we may apply the same argument after observing that $\conv (C \cup g_{31}(S_3))$ is a convex polygon, and the fact that $\conv (C \cup g_{31}(S_3)) \subset \conv(D_1) \subset P_0$ implies that the turning angle of $\conv (C \cup g_{31}(S_3))$ at $g_{21}(q_{23})$ is at least $\frac{2\pi}{n}$.

\begin{figure}[ht]
\begin{center}
 \includegraphics[width=0.35\textwidth]{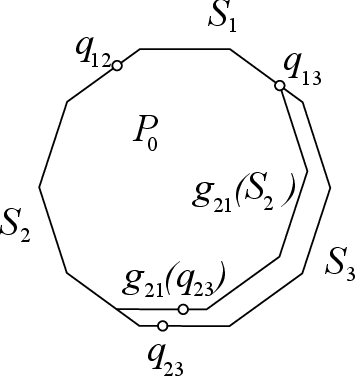}
 \caption{An illustration for Case 1 in Subsection~\ref{subsec:type3}}
\label{fig:type3_case1}
\end{center}
\end{figure}

Finally, in the remaining cases, if $g_{21}(q_{23})$ lies in the interior of $S_1$ with $g_{21}(q_{23}) \in [p_m,p_{m+1}]$ or $g_{21}(q_{23}) \in [p_n,p_1]$, we can apply the argument in the previous case.

\emph{Case 2}, exactly two of $S_1, g_{21}(S_2)$ and $g_{31}(S_3)$ coincide.
Without loss of generality, we may assume that $S_1 = g_{21}(S_2)$.
In the consideration in this case, for brevity, for any $i \neq j$ we let $q_{ij}$ denote the intersection point of $S_i$ and $S_j$, let $q$ denote the unique point in $D_1 \cap D_2 \cap D_3$, and set $C_{ij} = D_i \cap D_j$. By Lemma~\ref{lem:topology}, we have that $C_{ij}$ is a simple (possibly degenerate) curve connecting $q_{ij}$ and $q$.

Note that since $S_3$ cannot contain more than $\frac{n}{2}$ vertices of $P$, we have that each of $S_1$ and $S_2$ contains at least $\frac{n}{4}$ and at most $\frac{n}{2}$ vertices of $P$. This and $n \geq 5$ implies that $g_{21}(P)=P$; that is, $g_{21}$ is an isometry of $P$. In particular, $g_{21}$ (and consequently $g_{12}$) is either a reflection about a symmetry line of $P$, or a rotation around $o$ with angle $\alpha = \frac{2m \pi}{n}$ for some integer $1 \leq m \leq n$. Depending on the type of $g_{21}$, we distinguish two subcases.

\emph{Subcase 2.1}, $g_{21}$ is a rotation around $o$. Then the angle of rotation is $\alpha = \frac{2m \pi}{n}$ for some integer $\frac{n}{4} \leq m < \frac{n}{2}$, which implies, in particular, that $l(S_1)=l(S_2)=m$, and $l(S_3)=n-2m$.  

Observe that since $o$ is a fixed point of $g_{21}$, we have either $o \in D_1 \cap D_2$ or $o \in \inter (D_3)$. First, assume that $o \in D_1 \cap D_2$. If $o=q$, then $g_{21}(q_{12})=q_{13}$, $g_{21}(q_{23})=q_{12}$, and $S_1$ and $S_3$ are congruent, yielding that the tiling is rotationally generated. If $o \neq q$, then $o$ has a closed circular neighborhood $B$ disjoint from $D_3$. Let $t \mapsto C(t)$ be a continuous parametrization of the curve $C_{12}$ with $C(0)=o$, and let $t^+ = \sup \{ t : C([0,t]) \subset B\}$, and 
$t^- = \inf \{ t : C([t,0]) \subset B\}$. Then $g_{21}(C(t^{\pm})) = C(t^{\mp})$, implying that $g_{21}$ is a reflection about $o$, which contradicts the condition that the configuration is Type 3. Thus, we have $o \in \inter (D_3)$.

Let $q_2 = g_{12}(q)$ and $q_1 = g_{21}(q)$. Then $q_2 \in \partial D_2$, $q_1 \in \partial D_1$ and $q_1,q_2 \notin \partial (P)$.
Let $P_0 \subset D_3$ be the homothetic copy of $P$ of maximum homothety ratio centered at $o$. Then $P_0$ touches at least one of the curves $C_{13}$ and $C_{23}$, say, at a point $x_2 \in C_{23}$. Let $x_1 = g_{21}(x_2)$. By the definition of $P_0$, we have $x_1 \in D_3$, and by $x_2 \in D_2$, we have $x_1 \in D_1$. From this it follows that $x_1 \in C_{13}$, implying that $C_{13} \cap g_{21}(C_{23}) \neq \emptyset$. As $C_{12} \cup C_{13}$ is a connected curve from $q_{12}$ to $q_{13}$, and $g_{21}(C_{23})$ is a connected curve in $C_{12} \cup C_{13}$ from $q_{12}$ to $q_1$, the relation $C_{13} \cap g_{21}(C_{23}) \neq \emptyset$ implies that $q_1$ is an interior point of $C_{13}$, from which $q_2 \in int(C_{23})$ also follows. For $i=1,2$, let $C_{i3}^s$ and $C_{i3}^q$ denote the closed arcs of $C_{i3}$ from $q_{i3}$ to $q_i$, and from $q_i$ to $q$, respectively (cf. Figure~\ref{fig:type3_subcase2_1}). Then $g_{21}(C_{23}^s)=C_{12}=g_{12}(C_{13}^s)$ and $g_{21}(C_{23}^q) = C_{13}^q$ yield that the corresponding arcs are congruent.
Thus, by Lemma~\ref{lem:cutcong}, as $\partial D_1 = S_1 \cup C_{13}^q \cup C_{13}^s \cup C_{12}$ and $\partial D_3 = S_3 \cup C_{13}^q \cup C_{13}^s \cup C_{23}^q \cup C_{23}^s$, the equidecomposability of $\partial D_1$ and $\partial D_3$ yields that $S_3 \cup C_{13}^q$ and $S_1$ are equidecomposable, implying that $C_{13}^q$ (and also $C_{23}^q$) is a polygonal curve of length $l(S_1)-l(S_3)$. This yields, in particular, that $l(S_1) > l(S_3)$, $\alpha = \frac{2m \pi}{n} > \frac{2 \pi}{3}$, $m > \frac{n}{3}$, and that $S_1$ contains at least one side of $P$.
%As a counterpoint to this inequality, we remark that from the facts that no $S_i$ contains diametrically opposite vertices of $P$ and $\alpha < \pi$, it follows that $S_3$ contains at least two vertices of $P$ as interior points and $l(S_3) \geq 2$. This also yields that if the endpoints of $S_3$ are vertices of $P$, then $l(S_3) \geq 4$. 

\begin{figure}[ht]
\begin{center}
 \includegraphics[width=0.45\textwidth]{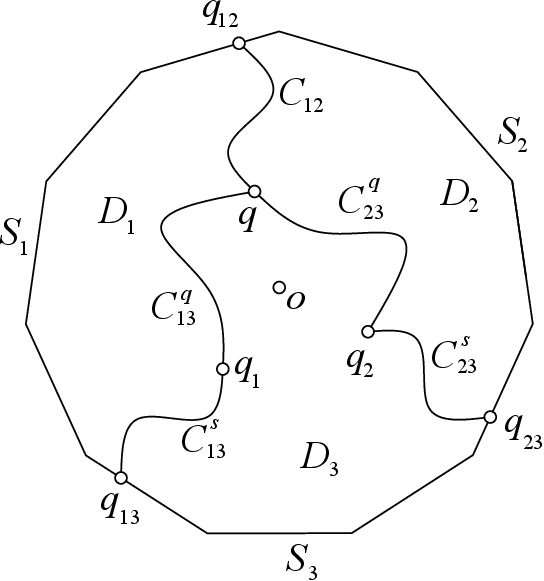}
 \caption{An illustration for Subcase 2.1 in Subsection~\ref{subsec:type3} with $n=11$ and $m=4$.}
\label{fig:type3_subcase2_1}
\end{center}
\end{figure}

Note that as $g_{21}$ is a rotation around $o$, either all the $q_{ij}$ are interior points of some edges of $P$, or all are vertices. Furthermore, $l(S_1)=l(S_2) > l(S_3)$, and $l(C_{13}^q)=l(C_{23}^q)$ are positive integers, and if the $q_{ij}$ are vertices of $P$, then $l(S_3) \geq 2$.
We prove the assertion under the assumption that all the $q_{ij}$ are interior points of some edges of $P$, as in the opposite case a slight modification of our argument can be applied.

Let us call a \emph{copy} of $S_i$ in $\partial D_j$ a subset $S$ of $\partial D_j$ congruent to $S_i$ such that $S$ is not a proper subset of some connected curve $S' \subset \partial D_j$ with the property that the unique congruent copy of $\partial P$ containing $S$ also contains $S'$, and observe that any two copies of $S_i$ are either nonoverlapping or slightly overlapping. Recall that by Lemma~\ref{lem:finite_copies}, for any values of $i$ and $j$, $\partial D_j$ contains finitely many copies of $S_i$.
%If $S$ is a copy of $S_i$ in $\partial D_j$, then every interior point $p$ of $C$ has an open circular neighborhood that intersects $\partial D_j$ only in points of $S$. Thus, $S$ divides this neighborhood into two components exactly one of which belongs to $D_j$. If this component belongs to the congruent copy of $P$ containing $S$ in its boundary, then this property is independent of $p$, and we say that $S$ is a \emph{convex copy}. In the opposite case we say that it is a \emph{concave copy}.

Consider the case that $q$ is an interior point of a copy $S$ of $S_1$ in $\partial D_3$. Let $S'$ and $S''$ denote the parts of $S$ in $C_{13}^s$ and in $C_{23}^s$, respectively, and assume that $q_1, q_2$ are not interior points of $S$. Then $l(C_{13}^q) = l(S_1)-l(S_3) \leq m-1 = l(S_1)-1$ implies that $S'$ and $g_{21}(S'')$ have a common vertex in their interiors. Thus, they belong to the boundary of the same congruent copy $P'$ of $P$. On the other hand, from this we obtain that $g_{21}$ is a symmetry of $P'$, and hence, $P'$ is centered at $o$, a contradiction. If exactly one of $q_1$ or $q_2$, say $q_2$ is an interior point of $S$, then $S' \subseteq C_{13}^s=g_{21}(S'')$, which yields that $S'$ contains a vertex or it is a segment of length strictly less than one. In the first case we can apply the previous consideration, and in the second case, since by the properties of rotation $S''$ contains a segment of length $l(S')$ that ends at $q_2$, we reach a contradiction with the fact that $l(C_{23}^s) =l(S_1)-l(S_3)$ is a positive integer. Finally, if both $q_1$ and $q_2$ are interior points of $S$, then, by the properties of rotation, $C_{13}^s \cup C_{23}^s \subset S$ yields that the unique regular $n$-gon that contains $S$ in its boundary is $P$; a contradiction.
Thus, we have that $q$ does not belong to the interior of any copy of $S_1$ in $D_3$.

Note that the numbers of copies of $S_1$ in $\partial D_1$ and in $\partial D_3$ are equal. Furthermore, the number of copies of $S_1$ in $\partial D_1$ in $C_{13}^s \cup C_{12}$ is equal to this number in $\partial D_3$ in $C_{13}^s \cup C_{23}^s$. On the other hand, since $l(C_{13}^q)=l(C_{23}^q) < l(S_1)$ and $q$ does not belong to the interior of a copy of $S_1$ in $\partial D_3$, it follows that the number of the copies of $S_1$ in $\partial D_1$ containing an element of $Q_1=\{ q_{12},q_{13},q,q_1\}$ in their interiors is one less than the number of copies in $\partial D_3$ containing an element of $Q_3=\{ q_{13}, q_{23}, q_1,q_2 \}$ in their interiors.

Observe that for $i=1,3$ and any element of $Q_i$, there is at most one copy of $S_1$ in $\partial D_i$ containing it in its interior.
Furthermore, if the interior of every copy of $S_1$ in $\partial D_3$ contains at most one element of $Q_3$, then the number of copies of $S_1$ in $\partial D_3$ whose interiors intersect $Q_3$ is not larger than the number of copies of $S_1$ in $\partial D_1$ whose interiors intersect $Q_1$.
Thus, we may assume that there is a copy $S$ in $\partial D_3$ containing both $q_{13}$ and $q_1$, or both $q_{23}$ and $q_2$. In both cases, we have that $S$ slightly overlaps $S_3$. Since the internal angle of $D_3$ at $q_{13}$ or $q_{23}$, respectively, is obtuse, this yields that both conditions cannot be satisfied simultaneously. Hence, $\partial D_3$ contains exactly one copy of $S_1$, which slightly overlaps $S_3$. Then $g_{13}(S_1)$ coincides with this copy of $S_1$.
Furthermore, $\partial D_i$ is a closed polygonal curve for every value of $i$.

Assume that $q_{13} \in g_{13}(S_1)$, and let $q'$ denote the vertex of $P$ in $S_1$ closest to $q_{12}$. Since $q$ is not an interior point of $g_{13}(S_1)$ and $l(C_{13}^q)=l(S_1)-l(S_3)$, we have that $l(C_{12}) =l(C_{13}^s) \geq l(S_3)-|q'-q_{12}|$. Thus, $q$ is not an interior point of $g_{31}(S_3)$. On the other hand, since $l(S_1)-l(S_3) \geq 1$ and by the definition of a copy, the endpoint of $g_{31}(S_3)$ in $\partial D_1$, different from $q'$, is not an interior point of $C_{12}$. Thus, it follows that this endpoint of $C_{12}$ is $q$. Thus, $l(C_{12}) =l(C_{13}^s) = l(S_3)-|q'-q_{12}|$. This yields that $l(C_{13}^s)+l(C_{13}^q)=l(S_1)-|q'-q_{12}|$, and $q$ is an endpoint of $g_{13}(S_1)$. Now we have completely described the boundaries of the $D_i$, in particular each consists of segments of unit length and some strictly shorter segments. A simple counting shows that $\partial D_1$ contains exactly 4 segments of length strictly smaller than $1$ ($2$ in $S_1$, $1$ in $C_{12}$ and $1$ in $C_{13}^q$), and $\partial D_3$ contains exactly $6$ such segments ($2$ in $S_3$, $1$ in $C_{13}^q$, $2$ in $C_{23}^q$ and $1$ in $C_{23}^s$). This contradicts the fact that $D_1$ and $D_3$ are congruent.

If $q_{23} \in g_{13}(S_1)$, a similar consideration proves the assertion.

\emph{Subcase 2.2}, $g_{21}$ is a reflection about a symmetry line $L$ of $P$. Without loss of generality, we assume that $L$ is the $y$-axis, and the common point of $S_1$ and $S_2$ lies on the positive half of $L$. Note that $g_{12}=g_{21}$, and $D_3 = \cl (P \setminus (D_1 \cup D_2))$ is symmetric to $L$.

Clearly, by Remark~\ref{rem:reflection}, $L$ separates $D_1$ and $D_2$, and from this it readily follows that
$D_1 \cap D_2 = [q_{12},q]$. Furthermore, we have that $D_1$ and $D_2$ are the closures of the subsets of $P \setminus D_3$ contained in the two closed half planes bounded by $L$.

Let $Y= g_{13}(S_1)$. Assume that $Y$ slightly overlaps $S_3$. Then $Y$ crosses $L$. By the symmetry of $D_3$, the reflected copy $Y'$ of $Y$ about $L$ also belongs to $\partial D_3$ and it also crosses $L$. Thus, $Y$ and $Y'$ overlap, and $Y \cup Y'$ intersects $L$ at a right angle. From this we have that $D_3 = \conv D_3$ is the convex region bounded by $S_3 \cup Y \cup Y'$, and an elementary computation yields that $\area(D_3) > \frac{\area(P)}{3}$; a contradiction. Thus, we have that $Y$ does not overlap $S_3$.

Let $Y'$ be the reflected copy of $Y$ about $L$. If $Y$ does not contain $q$ in its interior, then the facts that $l(Y)+l(Y')+l(S_3) = l(P)$ and that $Y, Y', S_3$ are subsets of $\partial (\conv (D_3))$ yield that $D_3 = \conv (D_3) = P$, which contradicts our assumptions. Thus, $Y$ and $Y'$ overlap.  If $Y \cap Y'$ contains a vertex in its interior, then $Y \cup Y'$ belongs to the boundary of a regular $n$-gon, implying that $g_{31}(Y \cup Y') \subset \partial P$. Hence, $Y$ and $Y'$ either slightly overlap, or they coincide. 

Consider the case that $Y$ and $Y'$ slightly overlap, and let $E = Y \cap Y'$. Then $g_{31}(E) \subset g_{31}(Y) = S_1$ lies on the edge containing $q_{12}$, or the edge containing $q_{13}$. Since in the first case $\partial D_1$ and $\partial D_2$ cross, we have that $g_{31}(E)$ lies on the edge containing $q_{13}$; we remark the property that $S_1$ and $g_{31}(Y')$ slightly overlap with $q_{13} \in S_1 \cap g_{31}(Y')$ implies also that $q_{13}$ is not a vertex of $P$.

Let $L_1$ be the supporting line of $P$ parallel to $L$ such that the infinite strip bounded by $L$ and $L_1$ contains $S_1$. If $L_1 \cap P$ is disjoint from $S_1$,
then $D_3$ contains diametrically opposite points of $P$, contradicting our assumptions. Thus, either $q_{13} \in L_1 \cap P$ or $L_1 \cap P$ belongs to the interior of $S_1$. If $q_{13} \in L_1 \cap P$, then $L_1 \cap P$ is a side of $P$, and both endpoints of $g_{31}(Y \cup Y')$ lie on $L$, implying that $D_1 = \conv (D_1) = \conv (g_{31}(Y\cup Y'))$, and thus, $\partial D_1$ does not contain a part congruent to $S_3$; a contradiction. Hence, we are left with the case that $L_1 \cap P$ belongs to the interior of $S_1$. Let $q'=g_{31}(q)$ and let $L'$ be the line intersecting $\partial P$ orthogonally at $q'$ (cf. Figure~\ref{fig:type3_subcase2_2}). Then $q_{12}$ is a unique point in $D_1$ farthest from $q'$. On the other hand, by symmetry, the distances of the two endpoints of $g_{31}(Y \cup Y')$ from $q'$ are the same. Since one of these endpoints is $q_{12}$, it follows that the two endpoints coincide, implying that $g_{31}(Y \cup Y')$ is a simple, closed, convex  curve in $\partial D_1$. Thus, $D_1 = \conv (g_{31}(Y \cup Y'))$, or equivalently, $D_3 = \conv (Y \cup Y')$, which contradicts the assumption that $S_3 \subset \partial D_3$.

\begin{figure}[ht]
\begin{center}
 \includegraphics[width=0.45\textwidth]{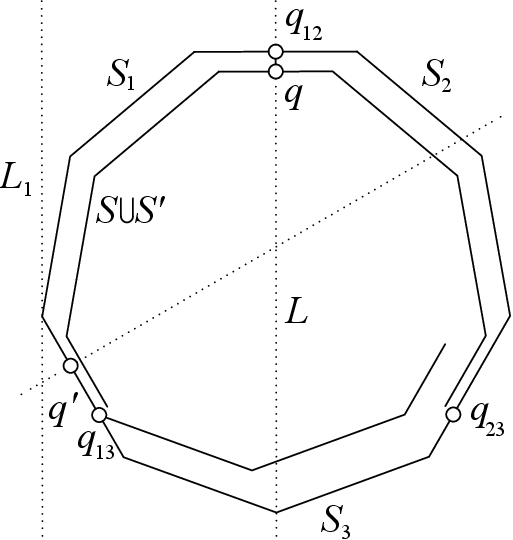}
 \caption{An illustration for Subcase 2.2 in Subsection~\ref{subsec:type3}.}
\label{fig:type3_subcase2_2}
\end{center}
\end{figure}

Finally, we consider the case that $Y=Y'$. Then $Y$ is symmetric to $L$, yielding that $S_1$ is symmetric to the line $L'=g_{31}(L)$. Thus, $L'$ is the bisector of either an edge or an angle of $P$, which implies that $o \in L'$. Since $D_3$ is symmetric to $L$, we also have that $D_1$ is symmetric to $L'$. Let $g$ be the reflection about $L'$, and consider the transformations $g'_{12}, g'_{13}, g'_{23}$ defined by $g'_{12}(\cdot)= g_{12} (g(\cdot))$; $g'_{13} = g_{13}$ and $g'_{23}(\cdot) = g'_{13} (g'^{-1}_{12} (\cdot))$. Clearly, the transformation $g'_{ij}$ is an isometry mapping $D_i$ into $D_j$, and $g'_{12}$ is a rotation around $o$. Thus, in this case we can apply the consideration in Subcase 2.1. 

\emph{Case 3}, all of $S_1, g_{21}(S_2)$ and $g_{31}(S_3)$ coincide.

Since this yields that all of $S_1, g_{21}(S_2)$ and $g_{31}(S_3)$ contain vertices of $P$, it follows that $g_{12}$ and $g_{13}$ are symmetries of $P$. This implies that $o$ is a fixed point of both $g_{21}$ and $g_{31}$, and thus, the unique common point of $D_1$, $D_2$ and $D_3$ is $o$.
If both $g_{21}$ and $g_{31}$ are rotations about $o$, then the tiling is clearly rotationally generated, and we are done.
Assume that, e.g. $g_{21}$ is a reflection about a symmetry line $L$ of $P$. Then Remark~\ref{rem:reflection} implies that $L$ separates $D_1$ and $D_2$, from which we obtain that the curve $D_1 \cap D_2$ is a straight line segment connecting the common point of $S_1$ and $S_2$ to $o$. By the properties of rotations, from this we also have that for any $i \neq j$, $D_i \cap D_j$ is a straight line segment connecting the common point of $S_i$ and $S_j$ to $o$. This, combined with the fact that $l(S_1)=l(S_2)=l(S_3)$, readily yields that the tiling is rotationally generated.

\section{Concluding remarks and open questions}\label{sec:remarks}

\begin{remark}
A simplified version of the proof of Theorem~\ref{thm:main} can be applied to prove the same statement for monohedral tilings of a regular triangle with at most three tiles.
\end{remark}

The authors have found no results about monohedral tilings of convex polygons in spherical or hyperbolic planes.
This is our motivation to state the following problem. Before doing so, we note that the symmetry group of an equiangular convex quadrilateral in spherical or hyperbolic planes contains the symmetry group of a Euclidean rectangle as a subgroup.

\begin{problem}
Let $\mathbb{M}^2$ denote the spherical plane $\mathbb{S}^2$ or the hyperbolic plane $\mathbb{H}^2$, and let $P \subset \mathbb{M}^2$.
Characterize the monohedral tilings of $P$ with at most three discs if $P$ is a
\begin{itemize}
\item[(i)] circular disc;
\item[(ii)] a regular polygon;
\item[(iii)] an equiangular convex quadrilateral.
\end{itemize}
\end{problem}

\section{Data Availability statement}

 Data sharing not applicable to this article as no datasets were generated or analysed during the current study.

\end{document}